\newcommand{\rr}{\mathbb R}
\newcommand{\LL}{\mathcal L}
\newcommand{\pp}{\mathbb P}
\newcommand{\ppx}{\mathbb P^x}
\newcommand{\ee}{\mathbb E}
\newcommand{\eex}{\mathbb E^x}
\newcommand{\eey}{\mathbb E^y}
\newcommand{\eez}{\mathbb E^z}
\newcommand{\calB}{\mathcal B}
\numberwithin{equation}{section}
 \journalname{}
\begin{document}

\title{Occupation Times for Stable-like Processes
}


\author{Brian M. Whitehead}         

\authorrunning{B.M. Whitehead} 

\institute{B. Whitehead \at
              Department of Mathematics\\
	    University of Connecticut\\
	    Storrs, CT 06269, U.S.A. \\
              Tel.: 860-486-1287\\
              \email{brian.whitehead@uconn.edu}       }    

\date{Received: date / Accepted: date}

\maketitle

\begin{abstract}
We consider a class of pure jump Markov processes in $\rr^d$ whose jump kernels are comparable to those of symmetric stable processes.  We prove a support theorem, a lower bound on the occupation times of sets, and show that we can approximate resolvents using smooth functions.
\keywords{Jump processes \and stable-like processes \and occupation times \and support theorem \and resolvent approximation \and integral operators}
 \subclass{Primary 60J75 \and Secondary 60J35, 60G52 }

\end{abstract}

\section{Introduction}\label{intro}

In this paper, we will consider pure jump Markov processes in $\rr^d$ whose jump structures are comparable to those of a symmetric stable process of index $\alpha$.  In recent years there has been increased interest in jump processes, since they appear to better model certain physical and financial phenomena.  Here, we prove that a support theorem holds, that there is a lower bound on occupation times of sets, and that we can approximate resolvents using smooth functions.

We will consider processes associated to the operator
\begin{equation}
\LL f(x) = \int _{\rr^d - \{0\}} [f(x+h)-f(x) - \nabla f(x) \cdot h1_{(|h| \leq 1)}] n(x,h) dh,
\end{equation}
which is a non-local operator, in the sense that the behavior of $\LL f$ at a point $x$ depends on the values of $f$ at distant points.  
Non-local operators have been studied frequently in the last several years.  In \cite{bbg}, a Liouville property was shown, and in 2002, Bass and Levin \cite{bl} gave the first proof of a Harnack inequality for such an operator.  This result was then generalized in \cite{sv} and \cite{bk}.  

We use the notion of the martingale problem to associate a process $X_t$ to $\LL$.  Then $n(x,h)$ gives us the intensity of the number of jumps from $x$ to $x+h$.  If we set $n(x,h) = \frac{c_1}{|h|^{d + \alpha}}$, $X_t$ would be a symmetric stable process of index $\alpha$.  Here, we allow $n(x,h)$ to depend on location in $\rr^d$ as well as the size of the jumps. 

 A support theorem is a result which states that there will be some positive probability that the processes we are considering will not stray too far from the image of any given continuous map $\varphi \colon [0,t] \to \rr^d$.  That is, if we fix $\varepsilon > 0$, and let $\varphi(0) = x_0$, then there exists $c_1 > 0$ depending on $\varphi$, $\varepsilon$, and $t_0$ such that
$$\pp^{x_0}\left(\displaystyle \sup_{s\leq t_0} |X_s - \varphi(s)| < \varepsilon \right) > c_1.$$
    Support theorems proven in other contexts have been useful tools in further proofs.  Bass and Chen \cite{bc2} showed that a support theorem holds for a different class of jump processes which are only allowed to jump in finitely many different directions, while the processes we are considering can jump in any direction.

Bass and Levin \cite{bl} showed that a Harnack inequality holds for the processes we are considering.  As part of this proof, they demonstrated that these processes will hit sets of positive Lebesgue measure with positive probability.  With the additional assumption of some continuity in the jump kernel, we are able to extend this result in Theorem \ref{t43}, by showing that such processes will be expected to spend a positive amount of time in sets of positive Lebesgue measure.  In particular, we define the occupation time of a set $B$ to be 
\begin{equation}
\eex \int_0^\tau 1_B(X_s)ds,
\end{equation} 
where $\tau$ is the first time we leave some ball in $\rr^d$ containing the set $B$ and $|B|$ is the Lebesgue measure of $B$.  We show that there exists a nondecreasing function $\varphi : (0,1) \to (0,1)$ such that if $x \in Q(0, 1/2)$ and $B \subseteq Q(0,1)$, then
$$ \eex \int_0^{\tau_{Q(0,1)}}1_B(X_s) ds \geq \varphi(|B|). $$ 
Here $Q(0,r)$ denotes the cube centered at 0 with side length $r$.

Finally, we will use these results in order to show that we can approximate resolvents using smooth functions, a result similar to one known for the continuous processes associated to nondivergence form operators \cite{bass}.  We suppose that $n^\varepsilon(x,h)$ is in $C^\infty$ with respect to $x$ for any fixed $h$, and we define
\begin{equation}
\LL^\varepsilon f(x) = \int _{\rr^d - \{0\}} [f(x+h)-f(x) - \nabla f(x) \cdot h1_{(|h| \leq 1)}] n^\varepsilon(x,h) \thinspace dh. 
\end{equation}
We will prove that 
there exist $n^\varepsilon(x,h) \in C^\infty$ such that if $\ppx_\varepsilon$ is the solution to the martingale problem for $\LL^\varepsilon$ started at $x$ and 
\begin{equation}
S_\lambda^\varepsilon h(x) = \eex_\varepsilon\int_0^\infty e^{-\lambda t}h(X_t)\thinspace dt
\end{equation}
for $h$ bounded, then
\begin{equation}
S_\lambda^\varepsilon f  \to \ee \int_0^\infty e^{-\lambda t} f(X_t) \thinspace dt
\end{equation}
whenever $f$ is continuous.
We also will show that the $n^\varepsilon(x,h)$ will converge to $n(x,h)$ almost everywhere.

Section 2 contains some preliminaries and states some useful results from \cite{bl}, section 3 contains the proof of the support theorem, section 4 is the proof of the lower bound on occupation times, and in section 5 we consider the approximation of resolvents by smooth functions.

The material in this paper is part of a Ph.D. dissertation under the advisement of Dr. Richard F. Bass.

\section{Preliminaries}\label{prelim}

We define \begin{equation}\label{scriptl}
\LL f(x) = \int _{\rr^d - \{0\}} [f(x+h)-f(x) - \nabla f(x) \cdot h1_{(|h| \leq 1)}] n(x,h) dh,
\end{equation}
where $f \in C^2$ such that $f$ and its first and second partial derivatives are bounded.
We will assume throughout this paper that $(\pp^x,X_t)$ is a strong Markov process with state space $\rr^d$ which has the property that for every $x$ the probability measure $\ppx$ is a solution to the martingale problem for $\LL$ started at $x$, that is, that

\noindent(a) $\pp(X_0 = x) = 1$ 

\noindent(b) for each $f \in C^2$ which is bounded with bounded first and second partial derivatives, 
$$f(X_t) - f(X_0) - \int_0^t \LL f(X_s) \thinspace ds$$
is a $\pp$-martingale.
  
We also make the following assumption, which is identical to the one made in \cite{bl}.

\begin{assumption}\label{a11}
(a) For all $x$ and $h$ we have $n(x,-h) = n(x,h).$
\newline
(b) There exist constants $\kappa  \in (0,1)$ and $\alpha \in (0,2)$ such that for all $x$ and $h$ we have 
\begin{equation} \label{key}
\frac{\kappa}{|h|^{d + \alpha}} \leq n(x,h) \leq \frac{\kappa^{-1}}{|h|^{d + \alpha}}.
\end{equation}
\end{assumption}
We will assume throughout this paper that such a process is given.  Existence is known  however, due to \cite{ba} and \cite{k}, provided an additional smoothness assumption on $n(x,h)$ is given.

Throughout this paper, we denote by $B(x,r)$ the ball of radius $r$ centered at $x$, and by $Q(x,r)$ the cube of side length $r$ centered at $x$.  $|A|$ will denote the Lebesgue measure of $A$.  We denote the hitting and exit times of set $A$ respectively, by $$T_A = \inf\{t>0:X_t \in A \},\qquad \tau_A = \inf\{t>0:X_t \notin A \}.$$
We write $X_{t-} = \lim_{s \uparrow t} X_s$, and $\Delta X_t = X_t - X_{t-}$.

The letter $c$ with subscripts will denote various positive constants with unimportant values.  These  constants will usually depend on $\alpha$, $\kappa$,  and $d$ along with other dependences which will be explicitly mentioned in our results.

Many results regarding processes satisfying Assumption \ref{a11} can be found in \cite{bl}.  As tools for proving a Harnack inequality, the authors showed that a scaling property holds, and proved some other useful results, which we list below and will reference throughout this paper.

\begin{lemma} \label{l24}
  Let $\varepsilon > 0$.  There exists $c_1$ depending only on $\varepsilon$ such that if $x \in \rr^d$ and $r > 0$, then  $$ \inf_{z \in B(x,(1-\epsilon)r)} \ee^z \tau_{B(x,r)} \geq c_1r^\alpha .$$

\end{lemma}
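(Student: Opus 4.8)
\medskip
\noindent\textbf{Proof proposal.} Rather than bounding the exit time from $B(x,r)$ directly, I would bound below the exit time from a small ball around the \emph{starting} point. Fix $\varepsilon\in(0,1)$, $x\in\rr^d$, $r>0$ and $z\in B(x,(1-\varepsilon)r)$. Since $|z-x|<(1-\varepsilon)r$, the triangle inequality gives $B(z,\varepsilon r)\subseteq B(x,r)$, so $\tau_{B(x,r)}\ge \sigma:=\tau_{B(z,\varepsilon r)}$ and it suffices to show $\ee^z\sigma\ge c_1 r^\alpha$. This I would obtain from the elementary bound $\ee^z\sigma\ge t_0\,\pp^z(\sigma>t_0)\ge t_0/2$, valid as soon as $t_0$ is chosen so that $\pp^z(\sigma\le t_0)\le 1/2$; the crux is that $t_0$ can be taken of the correct order $(\varepsilon r)^\alpha$.

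To estimate $\pp^z(\sigma\le t_0)$ I would test the martingale problem against a single cut-off. Fix once and for all $\phi\in C^\infty(\rr^d)$ with $0\le\phi\le 1$, $\phi\equiv 0$ on $B(0,1/2)$ and $\phi\equiv 1$ off $B(0,1)$, and put $f(y):=\phi\big((y-z)/(\varepsilon r)\big)$. Then $f$ is bounded and $C^2$ with bounded first and second derivatives, so property (b) applies to it; moreover $f(z)=0$, $0\le f\le 1$, and $f\equiv 1$ on $B(z,\varepsilon r)^c$. The key estimate, discussed below, is
\[
  \|\LL f\|_\infty \le c_2\,(\varepsilon r)^{-\alpha},\qquad c_2=c_2(\alpha,\kappa,d).
\]
Granting it, optional stopping for the bounded stopping time $t\wedge\sigma$ together with $f(X_0)=f(z)=0$ gives
\[
  \ee^z f(X_{t\wedge\sigma})=\ee^z\!\int_0^{t\wedge\sigma}\!\LL f(X_s)\,ds\le c_2(\varepsilon r)^{-\alpha}\,t .
\]
On the other hand $f\ge 0$, and on $\{\sigma\le t\}$ we have $X_\sigma\notin B(z,\varepsilon r)$, hence $f(X_\sigma)=1$; therefore $\ee^z f(X_{t\wedge\sigma})\ge \pp^z(\sigma\le t)$. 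Combining the two displays, $\pp^z(\sigma\le t)\le c_2(\varepsilon r)^{-\alpha}t$, and $t_0:=(\varepsilon r)^\alpha/(2c_2)$ makes the right-hand side $1/2$. Consequently $\ee^z\tau_{B(x,r)}\ge\ee^z\sigma\ge t_0/2=(\varepsilon r)^\alpha/(4c_2)$, so the lemma holds with $c_1=\varepsilon^\alpha/(4c_2)$, which depends only on $\varepsilon$ (beyond the standing dependence on $\alpha,\kappa,d$). One could equally first invoke the scaling property of \cite{bl} to reduce to $r=1$, $x=0$; this only shortens the bookkeeping in the estimate below.

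The one real obstacle is the bound $\|\LL f\|_\infty\le c_2(\varepsilon r)^{-\alpha}$, and specifically getting the correct exponent $-\alpha$ rather than the $-1$ that a crude treatment of the gradient-correction term would produce when $\alpha\le 1$. I would prove it by splitting the integral defining $\LL f(y)$ at $|h|=\varepsilon r$ (splitting the inner region further at $|h|=1$ if $\varepsilon r>1$) and, on each piece, symmetrizing the integrand in $h$ via Assumption \ref{a11}(a): the odd-in-$h$ contributions cancel — in particular $\int_{\varepsilon r<|h|\le 1}\nabla f(y)\cdot h\,n(y,h)\,dh=0$ — leaving only the second difference $|f(y+h)+f(y-h)-2f(y)|\le\|\nabla^2 f\|_\infty|h|^2\le c(\varepsilon r)^{-2}|h|^2$ on $\{|h|\le\varepsilon r\}$ and the bounded difference $|f(y+h)-f(y)|\le 2$ on $\{|h|>\varepsilon r\}$. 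Feeding these into \eqref{key} and using $\int_{|h|\le\varepsilon r}|h|^{2-d-\alpha}\,dh=c(\varepsilon r)^{2-\alpha}$ (finite since $\alpha<2$) and $\int_{|h|>\varepsilon r}|h|^{-d-\alpha}\,dh=c(\varepsilon r)^{-\alpha}$ yields the claim. Everything else — the inclusion $B(z,\varepsilon r)\subseteq B(x,r)$, optional stopping at a bounded time, and the final arithmetic — is routine.
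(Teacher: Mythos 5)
The paper does not prove Lemma \ref{l24}: it is imported verbatim from Bass and Levin \cite{bl} (their Lemma 3.4), so there is no in-paper argument to compare against. Your proposed proof is correct and self-contained. The reduction $B(z,\varepsilon r)\subseteq B(x,r)$, the elementary inequality $\ee^z\sigma\ge t_0\pp^z(\sigma>t_0)$, and the optional-stopping step are all sound, and the one genuinely delicate point --- showing $\|\LL f\|_\infty\le c(\varepsilon r)^{-\alpha}$ for the rescaled cut-off, with the exponent $-\alpha$ rather than the $-1$ a naive treatment of the gradient term would give when $\alpha\le1$ --- is handled correctly by symmetrizing in $h$ via Assumption \ref{a11}(a) so that the first-order terms cancel and only the second difference survives on $\{|h|\le\varepsilon r\}$.

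It is worth noting that the argument the paper \emph{does} carry out in this spirit, in the proof of Lemma \ref{l21}, takes a slightly different route: there the author truncates the operator to $\LL_\beta$ (discarding jumps of size $\ge\beta$) and tests against the unbounded function $f(x)=|x-x_0|^2$, which gives a bound of order $\beta^\alpha/\delta^2$ and requires a Meyer-type reattachment of the large jumps afterward. Your bounded smooth cut-off avoids truncating $\LL$ altogether: the large-jump region $\{|h|>\varepsilon r\}$ is controlled simply by $0\le f\le1$, and the small-jump region by the rescaled Hessian bound, yielding $(\varepsilon r)^{-\alpha}$ in one step. For the particular statement of Lemma \ref{l24}, your version is the cleaner one; for Lemma \ref{l21} the paper's version is needed because one must actually separate out and then control the first large jump. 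Either approach works here, and your constant $c_1=\varepsilon^\alpha/(4c_2)$ has exactly the dependence the lemma asserts.
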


\begin{lemma} \label{l25} 
There exists $c_1$ such that $\sup _z \ee^z \tau_{B(x,r)} \leq c_1 r^\alpha.$
\end{lemma}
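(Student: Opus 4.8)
The plan is to exploit the large jumps of $X$ directly, together with the scaling property from \cite{bl}. First I would reduce to the case $r=1$: the map $Y_t := r^{-1}X_{r^\alpha t}$ (suitably started) solves the martingale problem for a non-local operator whose kernel again satisfies Assumption \ref{a11} with the \emph{same} constants $\kappa,\alpha$, and under this rescaling $\tau_{B(x,r)}$ for $X$ corresponds to $r^\alpha\tau_{B(x/r,1)}$ for $Y$. Hence it suffices to produce a bound on $\sup_z\ee^z\tau_{B(0,1)}$ depending only on $\alpha,\kappa,d$ and valid for every process satisfying Assumption \ref{a11}; multiplying by $r^\alpha$ then finishes. (One could instead run the argument below with jumps of size $>2r$ and avoid scaling altogether.)

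Fix a continuous $g$ with $0\le g\le 1$, $g=0$ on $\{|h|\le 2\}$ and $g=1$ on $\{|h|\ge 3\}$. From the martingale problem one obtains, by a standard approximation of $g$ by $C^2$ test functions, that $N_t:=\sum_{s\le t}g(\Delta X_s)$ is finite and that
$$ N_t-\int_0^t\int_{\rr^d}g(h)\,n(X_s,h)\,dh\,ds $$
is a $\ppx$-martingale. By Assumption \ref{a11}(b), $\int g(h)\,n(X_s,h)\,dh\ge\kappa\int_{|h|\ge 3}|h|^{-d-\alpha}\,dh=:\lambda_0>0$, a constant depending only on $\alpha,\kappa,d$. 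The geometric point is that every jump counted by $N$ has size $>2$, so if such a jump occurs while $X$ is still in $B(0,1)$ it lands at distance $>1$ from the origin and forces an exit; thus, writing $\sigma:=\inf\{t:N_t>0\}$, we have $\tau_{B(0,1)}\le\sigma$ almost surely.

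I would then apply optional stopping at the bounded stopping time $t\wedge\sigma$. Since $N_{t\wedge\sigma}\le 1$,
$$ \lambda_0\,\ee^z[t\wedge\sigma]\le\ee^z\!\int_0^{t\wedge\sigma}\!\!\int g(h)\,n(X_s,h)\,dh\,ds=\ee^z[N_{t\wedge\sigma}]\le 1, $$
so $\ee^z[t\wedge\sigma]\le 1/\lambda_0$ for all $t$; letting $t\to\infty$ and invoking monotone convergence gives $\ee^z\sigma\le 1/\lambda_0$, hence $\ee^z\tau_{B(0,1)}\le 1/\lambda_0$ uniformly in $z$. Combined with the scaling reduction this yields $\sup_z\ee^z\tau_{B(x,r)}\le r^\alpha/\lambda_0$.

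The main thing to be careful about is the passage from the martingale problem, stated only for bounded $C^2$ functions, to the assertion that the compensated large-jump counting process is a martingale with the stated intensity; this is routine but does require the approximation step (and one uses that the total rate of jumps of size $\ge 2$ is also finite, so $N$ has finite intensity). One should also check the boundary case in $\tau_{B(0,1)}\le\sigma$: a jump of size $>2$ from a point with $|X_{s-}|\le 1$ reaches distance $>1$, so $X$ has genuinely left the open ball. An alternative, which avoids the counting process, is a test-function argument: build a bounded $f\in C^2$ with bounded derivatives satisfying $\LL f\le -1$ on $B(0,r)$ and apply optional stopping to $f(X_{t\wedge\tau})-f(X_0)-\int_0^{t\wedge\tau}\LL f(X_s)\,ds$; there the delicate part is verifying $\LL f\le -1$ using only the two-sided bound on $n$, which is why I would favor the large-jump argument above.
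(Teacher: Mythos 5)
The paper does not prove this lemma; it quotes it (as one of three background facts) from Bass--Levin \cite{bl}, so there is no in-paper argument to compare against. Your proof is correct and is the standard route to this estimate: a jump of size larger than $2r$ from inside $B(x,r)$ must exit the ball, such jumps occur with intensity bounded below by $\kappa\int_{|h|\ge 3r}|h|^{-d-\alpha}\,dh = c\,r^{-\alpha}$, and optional stopping applied to the compensated large-jump counting process turns this rate bound into $\ee^z\sigma\le c^{-1}r^\alpha$, hence $\ee^z\tau_{B(x,r)}\le c^{-1}r^\alpha$. (This also shows the scaling reduction, while correct, is optional — as you note, you can run the same computation with $g$ vanishing on $\{|h|\le 2r\}$ and equal to $1$ on $\{|h|\ge 3r\}$.)

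One small inaccuracy worth flagging: the martingale property of $N_t-\int_0^t\int g(h)\,n(X_s,h)\,dh\,ds$ is not obtained by ``approximating $g$ by $C^2$ test functions'' in the sense of applying the martingale problem to $g(X_t)$ — indeed $N_t$ is a path functional, not a function of $X_t$. What is really being used is the \emph{L\'evy system} for $X$, i.e.\ that for bounded nonnegative $F$ vanishing on the diagonal, $\sum_{s\le t}F(X_{s-},X_s)-\int_0^t\int F(X_s,X_s+h)\,n(X_s,h)\,dh\,ds$ is a martingale, which one derives from the martingale problem and the strong Markov property by testing $\LL$ against pairs of smooth functions with disjoint supports. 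This is a known fact (it is established in \cite{bl} and in \cite{bbck}), so your proof is sound, but the sentence justifying the compensated counting process should invoke the L\'evy system explicitly rather than direct test-function approximation. With that clarification the argument is complete, and the domination $\tau_{B(0,1)}\le\sigma$ is correctly justified by the observation that if no exit has occurred before a jump of size $>2$, then $|X_{\sigma-}|\le 1$ and hence $|X_\sigma|>1$.
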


\begin{prop}\label{p26}
Suppose  $A  \subseteq B(x,1)$.  There exists $c_1$ not depending on $x$ or $A$ such that
$$ \pp^y(T_A <\tau_{B(x,3)}) \geq c_1|A|, \qquad y \in B(x,2). $$
\end{prop}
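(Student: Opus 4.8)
\emph{Proof proposal.} The plan is to bound $\pp^y(T_A<\tau_{B(x,3)})$ below by the expected number of jumps that carry $X$ into $A$ before it exits $B(x,3)$, and to obtain such a bound from the L\'evy system of $X$ together with the exit-time estimate of Lemma~\ref{l24}.

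Fix $y\in B(x,2)$ and write $\tau=\tau_{B(x,3)}$. Let $N$ count the jumps of $X$ that begin in $B(x,3/2)^c$ and end in $A$ and occur strictly before $\tau$. Since $A\subseteq B(x,1)$, every such jump has size at least $\tfrac12$, so by Assumption~\ref{a11}(b) they accrue at rate at most $\kappa^{-1}2^{d+\alpha}|A|$; hence by the L\'evy system of $X$ (which expresses $\eez\sum_{s\le t}F(X_{s-},X_s)$ as $\eez\int_0^t\!\int F(X_s,X_s+h)\,n(X_s,h)\,dh\,ds$ for nonnegative $F$ vanishing on the diagonal) and Lemma~\ref{l25},
\begin{equation*}
\eez N=\eez\int_0^\tau 1_{B(x,3/2)^c}(X_s)\Big(\int_A n(X_s,u-X_s)\,du\Big)ds\ \le\ \kappa^{-1}2^{d+\alpha}\,|A|\;\eez\tau\ \le\ c\,|A|\qquad\text{for every }z ,
\end{equation*}
and in particular $N<\infty$ a.s. If $N\ge1$ then a jump carries $X$ into $A$ before $\tau$, so $T_A<\tau$; moreover, applying the strong Markov property at the first such jump, whose endpoint lies in $A$, gives $\eey N\le\pp^y(N\ge1)\bigl(1+\sup_z\eez N\bigr)$. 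Hence
\begin{equation*}
\pp^y(T_A<\tau)\ \ge\ \pp^y(N\ge1)\ \ge\ \frac{\eey N}{\,1+\sup_z\eez N\,}\ \ge\ \frac{\eey N}{\,1+c\,|B(x,1)|\,}\,,
\end{equation*}
and it remains to show $\eey N\ge c'|A|$.

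By the L\'evy system again, keeping only the part of the integrand where $X_s$ lies in the shell $B(x,3)\setminus B(x,3/2)$ and using $|u-X_s|\le 4$ for $X_s\in B(x,3)$, $u\in B(x,1)$, Assumption~\ref{a11}(b) gives
\begin{equation*}
\eey N=\eey\int_0^\tau 1_{B(x,3)\setminus B(x,3/2)}(X_s)\Big(\int_A n(X_s,u-X_s)\,du\Big)ds\ \ge\ \kappa\,4^{-(d+\alpha)}|A|\cdot\eey\int_0^\tau 1_{B(x,3)\setminus B(x,3/2)}(X_s)\,ds .
\end{equation*}
So the claim reduces to the uniform lower bound $\eey\int_0^{\tau}1_{B(x,3)\setminus B(x,3/2)}(X_s)\,ds\ge c$ for $y\in B(x,2)$. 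I would get this in two steps. First, from every $y\in B(x,2)$ the process reaches the shell $S=\{\,7/4\le|w-x|\le 5/2\,\}\subseteq B(x,3)$ before $\tau$ with probability bounded below: if $y\in S$ this is immediate, and if $y\in B(x,7/4)$ then $\pp^y\bigl(X_{\tau_{B(x,7/4)}}\in S\bigr)\ge c$ by the standard fact that a stable-like process does not jump too far when it exits a ball (this uses Assumption~\ref{a11}(b) and Lemma~\ref{l25} and is among the tools developed in \cite{bl}); since $S\subseteq B(x,3)$, entering $S$ forces $T_S<\tau$. Second, once $X_{T_S}=z\in S$, we have $B(z,1/8)\subseteq B(x,3)\setminus B(x,3/2)$, so by Lemma~\ref{l24} the process spends expected time at least $c>0$ in $B(z,1/8)$ before leaving it, and that time precedes $\tau$. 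Combining the two steps by the strong Markov property at $T_S$ gives the lower bound, and hence the proposition.

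The one nonroutine ingredient is the first half of the last step: controlling how far $X$ can travel in the single jump by which it exits a ball (equivalently, a lower bound on the probability of hitting a fixed bounded shell just outside that ball). The rest is bookkeeping with the L\'evy system and Lemmas~\ref{l24}--\ref{l25}. An alternative that avoids the jump-counting is to use $\pp^y(T_A<\tau)\ge c\,\eey\int_0^{\tau}1_A(X_s)\,ds=c\int_A g_{B(x,3)}(y,w)\,dw$ together with a lower bound on the Green function $g_{B(x,3)}$ of $B(x,3)$, but that Green-function estimate is itself a substantial input drawn from \cite{bl}.
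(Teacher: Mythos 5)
The paper itself does not prove Proposition~\ref{p26}; it is quoted from Bass and Levin \cite{bl}, so there is no in-paper argument to compare against line by line. Your L\'evy-system proof is a reasonable reconstruction in the same spirit, and most of it is sound: counting jumps from $B(x,3/2)^{c}$ into $A$ before $\tau$, using Lemma~\ref{l25} and the upper half of \eqref{key} to get $\sup_{z}\eez N\le c|A|$, converting the first-moment bound into a hitting probability via the renewal inequality $\eey N\le\pp^{y}(N\ge1)\bigl(1+\sup_{z}\eez N\bigr)$, and reducing everything to a uniform lower bound on $\eey\int_{0}^{\tau}1_{B(x,3)\setminus B(x,3/2)}(X_s)\,ds$ for $y\in B(x,2)$.

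The step you flag is a genuine gap, and as set up it does not quite close. You need $\pp^{y}\bigl(X_{\tau_{B(x,7/4)}}\in S\bigr)\ge c$ uniformly over $y\in B(x,7/4)$ with $S=\{7/4\le|w-x|\le5/2\}$. The L\'evy-system lower bound for this probability is of the form $c'\,\eey\tau_{B(x,7/4)}$, which degenerates as $y$ approaches $\partial B(x,7/4)$, and the companion overshoot estimate $\pp^{y}\bigl(|X_{\tau_{B(x,7/4)}}-x|>5/2\bigr)\le c''\,\eey\tau_{B(x,7/4)}$ has a constant $c''$ that is not guaranteed to fall below $1$ for the narrow shell that fits inside $B(x,3)$; so citing ``the process does not jump too far'' is not by itself enough. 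One fix is a dichotomy in $\eey\tau_{B(x,7/4)}$ (large exit time: use the lower bound; small exit time: the overshoot bound is below $1/2$). A cleaner fix is to drop the intermediate ball $B(x,7/4)$ entirely and jump directly into a thin shell near $\partial B(x,3)$: for $y\in B(x,2)$ take $S'=\{11/4\le|w-x|\le45/16\}$, which is disjoint from $B(y,1/4)\subseteq B(x,9/4)$, so Ikeda--Watanabe and Lemma~\ref{l24} give
\begin{equation*}
\pp^{y}\bigl(X_{\tau_{B(y,1/4)}}\in S'\bigr)=\eey\int_{0}^{\tau_{B(y,1/4)}}\!\int_{S'}n(X_s,u-X_s)\,du\,ds\ \ge\ \kappa\,(81/16)^{-(d+\alpha)}\,|S'|\;\eey\tau_{B(y,1/4)}\ \ge\ c>0,
\end{equation*}
with no overshoot estimate needed. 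For any $z\in S'$ one has $B(z,1/8)\subseteq B(x,3)\setminus B(x,3/2)$, hence $\eez\int_{0}^{\tau}1_{B(x,3)\setminus B(x,3/2)}(X_s)\,ds\ge\eez\tau_{B(z,1/8)}\ge c$ by Lemma~\ref{l24}, and the strong Markov property at $\tau_{B(y,1/4)}$ finishes the occupation-time bound. With that repair your proof is complete.
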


\section{Support Theorem}\label{supp}
In this section, we will prove a support theorem for $X$.  This proof is similar to the one given by Bass and Chen \cite{bc2}.  However, our proof will require some different techniques, since the processes we are considering are allowed to jump in any direction, while the processes considered in \cite{bc2} can only jump in finitely many different directions.  We begin by proving some lemmas.

\begin{lemma}\label{l21}
Let $x_0, x_1 \in \rr^d$, $y = x_1 - x_0$, and $\gamma > 0$.  There exists $t_0 > 0$, such that for all $t \leq t_0$ there is a positive constant $c_1$, depending only on $\gamma$, $|y|$, and $t$,  such that 
\begin{align*} 
\pp^{x_0}(&\textit{there exists a stopping time } T \leq t \textit{ such that } \\
 &\displaystyle \sup_{s<T} |X_s - x_0| < \gamma \textit{ and } 
    \sup_{T \leq s \leq t_0} |X_s - x_1| < \gamma) \geq c_1.
\end{align*}
\end{lemma}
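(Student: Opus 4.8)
The plan is to show that with positive probability the process first stays close to $x_0$, then makes a single large jump that lands it near $x_1$, and thereafter stays close to $x_1$. First I would fix a small radius $\delta \in (0,\gamma)$ to be chosen later and let $B_0 = B(x_0,\delta)$. By Lemma \ref{l24}, $\inf_{z \in B(x_0, \delta/2)} \ee^z \tau_{B_0} \geq c_2 \delta^\alpha$, and by Lemma \ref{l25} the same exit time has bounded expectation; combining these with a standard argument (e.g. using that $\tau_{B_0}$ is stochastically comparable to an exponential-type variable, or just Chebyshev on $\ee^{x_0}\tau_{B_0}$) gives a lower bound $\pp^{x_0}(\tau_{B_0} > t_1) \geq c_3$ for a suitably small $t_1$ depending on $\delta$. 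So on an event of probability at least $c_3$ the process remains within $\delta$ of $x_0$ up to time $t_1$.

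Next I want to force a jump from near $x_0$ into a small neighborhood of $x_1$ during a short time window, and then have the process stay near $x_1$. The key mechanism is that the jump kernel is bounded below by $\kappa |h|^{-d-\alpha}$, so from any point $z$ near $x_0$ the rate of jumping into a ball $B(x_1, \delta)$ is at least $c_4 |B(x_1,\delta)| / (|y| + 2\delta)^{d+\alpha} =: c_5 > 0$, uniformly in $z \in B_0$. Concretely, I would compare $X$ against the process run up to $\tau_{B_0}$: the first jump of size landing in $B(x_1,\delta)$ occurs at an exponential-type rate bounded below by $c_5$, so the probability that such a jump occurs before time $t_1$ and before $X$ leaves $B_0$ is at least some $c_6 > 0$. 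One clean way to organize this is to use the Lévy-system / Ikeda–Watanabe formula: $\pp^{x_0}\big(X$ jumps into $B(x_1,\delta)$ before $\tau_{B_0}\wedge t_1$, with all prior motion inside $B_0\big)$ is controlled from below by $\ee^{x_0}\!\int_0^{\tau_{B_0}\wedge t_1} \int_{B(x_1,\delta)-X_s} n(X_s,h)\,dh\,ds$ minus corrections, and this integrand is $\geq c_5$ on the good event from the previous paragraph, whose duration is $\geq t_1$ with probability $\geq c_3$. Let $T$ be this jump time; then $T \leq t_1$, $\sup_{s<T}|X_s - x_0| < \delta < \gamma$, and $|X_T - x_1| < \delta$. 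Note also that the displacement of the jump is bounded, $|\Delta X_T| \le |y| + 2\delta$, so it does not immediately throw the process far away.

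Finally, starting from $X_T \in B(x_1,\delta)$, I apply the same exit-time estimate as in the first step, now centered at $x_1$: by Lemma \ref{l24} and the strong Markov property at $T$, $\pp^{X_T}(\tau_{B(x_1,\delta)} > t_0 - t_1) \geq c_3$ provided $t_0$ (hence $t_0 - t_1$) is small. Choosing $\delta < \gamma$ and $t_0$ small enough that all three pieces fit, and multiplying the three probabilities via the strong Markov property, yields the claim with $c_1 = c_1(\gamma,|y|,t)$; since a smaller $t$ only shrinks the windows and the bounds degrade monotonically, the statement holds for all $t \le t_0$. The main obstacle I expect is making the middle step rigorous: controlling "the process makes exactly one jump, into $B(x_1,\delta)$, before leaving $B_0$" requires care because there are infinitely many small jumps happening, so one must either work with the compensated jump measure and the Lévy system formula directly, or decompose the kernel into a "small jumps" part (which keeps the process inside $B_0$ with high probability on short time scales) and a "big jump" Poissonian part of bounded rate, and argue that the first big jump is of the desired type with positive probability while no bad small-jump excursion has occurred. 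Handling the compensator term $\nabla f(x)\cdot h\,1_{(|h|\le 1)}$ and the fact that $\tau_{B_0}$ and the target jump time are not independent will be the fiddly points.
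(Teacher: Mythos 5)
Your proposal follows the same three-stage strategy the paper uses (keep the process near $x_0$, produce one large jump into a $\delta$-neighborhood of $x_1$, then stay near $x_1$), and you correctly identify the crux: isolating a single big jump from the infinite activity of small jumps. You even name both viable repairs; the paper takes the second route you mention, namely Meyer's construction, truncating the kernel at $|h|<\beta$ to get a small-jump process $\overline X$ generated by $\LL_\beta$ and then grafting on big jumps at times governed by auxiliary exponentials $S_1,S_2$ independent of $\overline X$. Two implementation points where the paper is cleaner than your sketch. First, for the ``stay near $x_0$'' step the paper does not use Lemma \ref{l24}/\ref{l25}; it applies optional stopping with $f(x)=|x-x_0|^2$ directly to $\overline X$, getting $\pp^{x_0}(\tau_{B(x_0,\delta)}\le t)\le c\,t\beta^\alpha/\delta^2$ and hence a lower bound on $\pp^{x_0}(\sup_{s\le t}|\overline X_s-x_0|\le\delta)$. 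Note your parenthetical ``just Chebyshev on $\ee^{x_0}\tau_{B_0}$'' would only bound $\pp(\tau_{B_0}>t_1)$ from \emph{above}; a lower bound needs a second-moment (Paley--Zygmund-type) argument, which does work via Lemma \ref{l25} and the strong Markov property, but is less direct. Second, because the event $E=\{\sup_{s\le t}|\overline X_s-x_0|\le\delta\}$ controls the small-jump part over the \emph{whole} window $[0,t]$ and is, by construction, independent of the big-jump clock, the paper does not need a separate ``stay near $x_1$'' step: once exactly one big jump occurs and lands in $B(y,\delta)$, the bound $\sup_{U_1\le s\le t_0}|X_s-x_1|\le3\delta<\gamma$ falls out of $E$ with $\delta=\tfrac{\gamma}{3}\wedge\tfrac{|y|}{6}$. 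Your version restarts the exit-time estimate at $X_T$ by the strong Markov property, which also works but requires a second anti-concentration step and a check on the leftover time $t_0-T$; the Meyer decomposition buys the independence for free and avoids both. Your L\'evy-system alternative for the middle step (bounding the expected number of jumps from $B_0$ into $B(x_1,\delta)$ before $\tau_{B_0}\wedge t_1$, and noting that number is at most one) would also close the argument if carried out carefully, but as written it is a sketch with the admitted ``minus corrections'' left unresolved.
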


\begin{proof}
Let $\beta = \frac{|y|}{2} \wedge 1$, and $\delta = \frac{\gamma}{3} \wedge  \frac{|y|}{6}$.  We define a new operator
\begin{equation}
\LL_\beta f(x) = \int _{|h| < \beta} [f(x+h)-f(x) - \nabla f(x) \cdot h] n(x,h) dh,
\end{equation}
and we let $\overline X$ be the strong Markov process associated to $\LL_\beta$, so that $\overline X$ has no jumps having size larger than $\beta$.  We consider the function $f(x) = |x - x_0|^2$, and we let $\tau = \tau_{B(x_0, \delta)}$.  Since $\pp^{x_0}$ solves the martingale problem for $\LL_\beta$ started at $x_0$, 
$$ f(\overline X_t) - f(x_0) - \int_0^t \LL_\beta f(\overline X_s) \thinspace ds $$
is a martingale.  Therefore, by applying optional stopping, we obtain that
\begin{equation}
\ee^{x_0} f(\overline X_{t \wedge \tau}) - f(x_0) =\ee^{x_0} \int_0^{t \wedge \tau}\LL_\beta f(\overline X_s) \thinspace ds.
\end{equation}
Since $f(x_0) = 0$, we obtain
\begin{equation}\label{e23}
\ee^{x_0}f(\overline X_{t \wedge \tau}) = \ee^{x_0} \int_0^{t \wedge \tau}\LL_\beta f(\overline X_s) \thinspace ds.
\end{equation}
We further observe that
\begin{equation}\label{e24}
\delta^2 \pp^{x_0}(\tau \leq t) \leq \ee^{x_0}f(\overline X_{t \wedge \tau}),
\end{equation}
since $f(x) \geq \delta^2$ outside of $B(x_0, \delta)$.  On the other hand, we have that 
\begin{equation}\label{e25}
\ee^{x_0} \int_0^{t \wedge \tau}\LL_\beta f(\overline X_s) \thinspace ds \leq t \sup_{s \leq t} \LL_\beta f(\overline X_s).
\end{equation}

Now for any $x \in \rr^d$, 
\begin{align*}
\LL_\beta f(x) &= \int_{|h| < \beta} \displaystyle \left[|x + h -x_0|^2 - |x - x_0|^2 - \nabla f(x) \cdot h\right]n(x,h)dh \\
&\leq \int_{|h| < \beta} \left[|x  -x_0|^2 + 2|x-x_0|\cdot|h| + |h|^2 - |x - x_0|^2 - 2|x-x_0|\cdot|h|\right]\frac{\kappa^{-1}}{|h|^{d+\alpha}}dh \\
&= \kappa^{-1} \int_{|h| < \beta} |h|^{2 - (d+\alpha)} = c_2\beta^\alpha
\end{align*}

Therefore, by combining this with \eqref{e23}, \eqref{e24}, and \eqref{e25}, we have that
\begin{equation}\label{e26}
\pp^{x_0}(\tau \leq t) \leq t\frac{c_2\beta^\alpha}{\delta^2}.
\end{equation}
Let $t_0 = \frac{\delta^2}{2c_2\beta^\alpha}$. Let 
$$ E = \displaystyle \left\{ \sup_{s \leq t} |\overline X_s - x_0| \leq \delta \right \}. $$  Thus if $t \leq t_0$, we will have that $\pp^{x_0}(E) \leq 1/2$.

We now will use a construction of Meyer to add some large jumps to the process $\overline X_t$, in order to create a process $X_t$ which will be associated to our operator $\LL$.  A reference for this process is Remark 3.4 of \cite{bbck}.  Let $t \leq t_0$ be fixed, let $U_1$ and $U_2$ be the times of the first two jumps we add to $\overline X$, and define 
$$D =  \{U_1 \leq t < U_2, \Delta X_{U_1} \in B(y,\delta) \}.$$

Let $S_1$ and $S_2$ be independent exponential random variables of parameter 1, which also independent of $\overline X$.    We note that for any $x \in \rr^d$, 
\begin{equation}\label{e27}
c_3 = \int_{|h| \geq \beta} \frac{\kappa}{|h|^{d+\alpha}} \leq \int_{|h| \geq \beta}n(x,h)dh \leq \int_{|h| \geq \beta} \frac{\kappa^{-1}}{|h|^{d+\alpha}} = c_4. 
\end{equation}
We define
$$ F = \{ S_1 \in (0, c_3t], S_2\in [c_4t, \infty) \}. $$
Now, 
\begin{equation*}
\ppx(S_1 \leq c_3t) = 1 - e^{-c_3t},
\end{equation*} and
\begin{equation*}
\ppx(S_2 \geq c_4t) = e^{-c_4t},
\end{equation*}
so by the independence of $S_1$ and $S_2$, $\ppx(F)  \geq c_5$.  Furthermore, the event $F$ was chosen to be independent of $E$, so we have
$$ \pp^{x_0}(F \cap E) =  \pp^{x_0}(F)\ \pp^{x_0}(E) \geq c_5/2.$$

We define
$$C_t = \int_0^t\int_{|h|\geq \beta} n(\overline X_s, h)dh \thinspace ds.$$
Per Meyer's construction, we will introduce an additional jump to $\overline X$ at the first time $U_1$ such that $C_{U_1}$ exceeds $S_1$, restart the process, and then introduce a second jump when $C_{U_2}$ exceeds $S_2$.  It follows from \eqref{e27} that if $F$ holds, we will add  exactly one jump to $\overline X$ before time $t$, so that if $G = \{U_1 \leq t < U_2\}$,
$$ \pp^{x_0}( G \cap E ) \geq  \pp^{x_0}(F \cap E) \geq c_5/2.$$

Suppose now that G holds.  
The location of the jump at time $U_1$ of size larger than $\beta$ will be determined by the distribution
$$q(x,dz) = \displaystyle \frac{n(x,z-x)}{\int_{|h| \geq \beta} n(x,h)dh}dz$$
where $|x - z| \geq \beta$.  Therefore, if $B = B(y, \delta)$, 
\begin{align*}
\pp^{x}(\Delta X_{U_1} \in B) &= \int_B q(X_{U_1-},X_{U_1-} + dz)  \\
&\geq \int_B \displaystyle \frac{n(X_{U_1-}, z)}{c_4}dz  \\
&\geq \frac{|B|\kappa^{-1}}{c_4|z|^{d+\alpha}} \geq \frac{|B|\kappa^{-1}}{c_4[(3/2)|y|]^{d+\alpha}} = c_6, 
\end{align*}
a constant which depends only on $\gamma$ and $|y|$.  This bound does not depend on $\overline X_{U_1}$, so we have that $ \pp^{x_0}(D \cap E) \geq c_5c_6/2$.

We now note that on $D \cap E$, 
$$\displaystyle \sup_{s<U_1} |X_s - x_0| < \delta < \gamma,$$ 
and
$$\displaystyle \sup_{U_1 \leq s \leq t_0} |X_s - x_1| \leq \delta + \delta + \delta <  \gamma,$$ 
so $U_1$ is our desired stopping time.
\qed
\end{proof}

\begin{lemma}\label{l22}
Let $t_1 > 0$, $\varepsilon > 0$, $r \in (0, \varepsilon/4)$, and $\gamma > 0$.  Let $\psi \colon [0, t_1] \to \rr^d$ be a line segment of length $r$ starting at $x_0$.  Then there exists $c_1 > 0$ that depends only on $t_1$, $\varepsilon$, and $\gamma$ such that
$$\pp^{x_0}\displaystyle \left(\sup_{s\leq t_1} |X_s - \psi(s)| < \varepsilon\textit{ and }|X_{t_1} - \psi(t_1)| < \gamma \right) \geq c_1.$$
\end{lemma}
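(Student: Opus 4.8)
The plan is to build the path out of two ingredients — Lemma~\ref{l21} and the strong Markov property — after two reductions. First, replacing $\gamma$ by $\min(\gamma,\varepsilon/4)$ only shrinks the event in question, so we may assume $\gamma\le\varepsilon/4$. Second, since $\psi$ is a straight segment of length $r$ with $\psi(0)=x_0$, every point of its image lies within distance $r$ of both $x_0$ and the endpoint $x_1:=\psi(t_1)$; hence if $X$ stays within $\gamma/2$ of $x_0$ on $[0,t_1]$, or within $\gamma/2$ of $x_1$ on $[0,t_1]$, it automatically stays within $\gamma/2+r<\varepsilon$ of $\psi(s)$ for every $s$. This is how the requirement $\sup_{s\le t_1}|X_s-\psi(s)|<\varepsilon$ will be met in every case.

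I would then record an auxiliary ``loitering'' bound: for every $p\in\rr^d$, $\rho>0$ and $T_*>0$ there is $c_*=c_*(\rho,T_*)>0$ such that $\pp^z(\sup_{s\le T_*}|X_s-p|<\rho)\ge c_*$ for all $z\in B(p,\rho/2)$. To prove it, partition $[0,T_*]$ into $N=N(\rho,T_*)$ intervals of a fixed length $\ell=\ell(\rho)$, and on each interval apply Lemma~\ref{l21} with source the current position $w\in B(p,\rho/2)$, target the point $\zeta$ at distance $\rho/4$ from $p$ on the ray from $w$ through $p$ (so $|\zeta-p|=\rho/4$ and $|\zeta-w|\in[\rho/4,3\rho/4]$), and with $\gamma$ there replaced by $\rho/8$; since $|\zeta-w|$ lies in a compact set bounded away from $0$, both the time $t_0$ and the probability constant produced by Lemma~\ref{l21} are bounded below uniformly in $w$, the process is kept inside $B(p,\rho)$ on the interval and back inside $B(p,\rho/2)$ at its right endpoint, and the bound follows by the strong Markov property. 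This is the one place where one peeks into the proof of Lemma~\ref{l21}, to see that its constants depend continuously on $|y|$.

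The main argument now splits on the size of $r$. If $r<\gamma/2$, then $x_1\in B(x_0,\gamma/2)$, so the single event $\{\sup_{s\le t_1}|X_s-x_0|<\gamma/2\}$ already forces both $\sup_{s\le t_1}|X_s-\psi(s)|<\gamma<\varepsilon$ and $|X_{t_1}-x_1|<\gamma/2+r<\gamma$; the loitering bound with $p=x_0$, $\rho=\gamma$, $T_*=t_1$ finishes this case. If $r\ge\gamma/2$, then $|x_1-x_0|=r$ lies in the compact interval $[\gamma/2,\varepsilon/4)$, bounded away from $0$, so Lemma~\ref{l21} applied with source $x_0$, target $x_1$, with $\gamma$ replaced by $\gamma/4$, and with its time parameter $t=\min(t_0,t_1)$, yields a time $t_0>0$ and a probability constant, both bounded below by quantities depending only on $t_1,\varepsilon,\gamma$. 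On the resulting event $X$ stays in the $\varepsilon$-tube on $[0,t_0\wedge t_1]$ and $|X_{t_0\wedge t_1}-x_1|<\gamma/4$. If $t_0\ge t_1$ we are done; otherwise we restart at time $t_0$ by the strong Markov property and apply the loitering bound with $p=x_1$, $\rho=\gamma/2$, $T_*=t_1-t_0$ to keep $X$ within $\gamma/2$ of $x_1$ — hence in the $\varepsilon$-tube — on $[t_0,t_1]$, with $|X_{t_1}-x_1|<\gamma/2<\gamma$. Multiplying the probabilities produces $c_1=c_1(t_1,\varepsilon,\gamma)>0$.

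The step I expect to be the genuine obstacle is the uniformity of the constant in $r$. A naive single application of Lemma~\ref{l21} from $x_0$ to $x_1$, or a naive iteration of it along the segment, yields a constant that degenerates as $r\downarrow 0$ (and a time $t_0$ that can fall below $t_1$). The case split is exactly the device that removes this: for small $r$ one does not move $X$ along the segment at all but only keeps it near $x_0$, which costs merely a ``stay in a ball of radius $\sim\gamma$'' estimate whose per-step probability and number of steps are governed by $\gamma$ and $t_1$ alone; for $r$ not small, $r$ is bounded away from $0$ in terms of $\gamma$, and continuity plus compactness make the Lemma~\ref{l21} constants uniform.
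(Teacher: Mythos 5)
Your proposal is correct in substance and takes a genuinely different route from the paper's proof. The paper subdivides $[0,t_1]$ into $n$ equal pieces, subdivides the segment correspondingly, and iterates Lemma~\ref{l21} $n$ times via the Markov property, moving a short step toward $\psi(kt_0/n)$ on each subinterval. You instead prove a ``loitering in a ball'' estimate (iterating Lemma~\ref{l21} in place, aiming repeatedly at a point at a fixed intermediate distance from the center), then split on whether $r<\gamma/2$ or $r\ge\gamma/2$: in the first case you never move the process along the segment at all, and in the second you make a single transition from $x_0$ to $x_1$ and loiter at $x_1$ for the remaining time. Both proofs rest on the same two ingredients (Lemma~\ref{l21} and the Markov property) and both need to peek into Lemma~\ref{l21}'s proof to get uniform constants: the paper to choose $n$ so that $t_1/n$ sits below Lemma~\ref{l21}'s $t_0$, you to see that the $t_0$ and the probability constant depend continuously on $|y|$ on a compact interval bounded away from $0$. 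The chief thing your organization buys is transparency of the uniformity in $r$ stated in the lemma: the loitering step involves only $|y|$ values in $[\rho/4,3\rho/4]$, and the one ``transport'' step has $|y|=r\in[\gamma/2,\varepsilon/4)$, so compactness gives a lower bound depending only on $t_1,\varepsilon,\gamma$, whereas the paper's iteration along the segment has step lengths $\sim r/n$ and the dependence of the final constant on $r$ is not made explicit there (it is harmless for Theorem~\ref{support}, which takes a minimum over finitely many sub-segments, but your version matches the lemma's literal statement more directly). One small slip: in the case $r<\gamma/2$ you invoke the loitering bound with $\rho=\gamma$, but the event you need is $\{\sup_{s\le t_1}|X_s-x_0|<\gamma/2\}$; take $\rho=\gamma/2$ there (so that $|X_{t_1}-x_1|<\gamma/2+r<\gamma$, as required). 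With that change the argument goes through.
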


\begin{proof}

Let $D_1$ be the event that there is a stopping time $T < t_0$ such that $|X_s - x_0| < \gamma \wedge\varepsilon/4$ for $ s < T$ and $|X_s - \psi(t_0)| < \gamma \wedge \varepsilon/4$ for $s \in [T, t_0]$. 
 By Lemma \ref{l21}, there exists $t_0 = t_1/n$ for some $n$ and $c_2 > 0$ such that $\pp^{x_0}(D_1) \geq c_2$.  We now note that on $D_1$, by definition, $|X_{t_0} - \psi(t_0)| < \gamma$.   We now show that on $D_1$,  $|X_s - \psi(s)| < \varepsilon$ for every $s \in [0, t_0]$.

If $s < T$, since $r < \varepsilon/4$, we have that
\begin{equation*}
|X_s - \psi(s)| \leq |X_s -x_0| + |x_0 - \psi(s)| < \frac{\varepsilon}{4} + \frac{\varepsilon}{4} < \frac{\varepsilon}{2}. 
\end{equation*}
Similarly, if $T \leq s \leq t_0$, we obtain that
\begin{equation*}
|X_s - \psi(s)| \leq |X_s -\psi(t_0)| + |\psi(t_0) - \psi(s)| < \frac{\varepsilon}{4} + \frac{\varepsilon}{4} < \frac{\varepsilon}{2}. 
\end{equation*}

If $t_0 = t_1$, we are done.  If not, then let $D_k$ be the event that there is a stopping time $(k-1)t_0< T < kt_0$ such that $|X_s - X_{(k-1)t_0}| < \gamma \wedge\varepsilon/4$ for $ s \in [(k-1)t_0, T]$ and $|X_s - \psi(kt_0)| < \gamma \wedge \varepsilon/4$ for $s \in [T, kt_0]$.  By Lemma \ref{l21}, $\pp^{x_0}(D_k) \geq c_3$.  

Now on $D_k$, we observe that when $ s \in [(k-1)t_0, T]$, then
\begin{align*}
|X_s - \psi(s)| &\leq |X_s -X_{(k-1)t_0}| + |X_{(k-1)t_0} - \psi((k-1)t_0) |+ |\psi((k-1)t_0) - \psi(s)|\\
&< \frac{\varepsilon}{4} + \frac{\varepsilon}{4} + \frac{\varepsilon}{4} < {\varepsilon}, 
\end{align*}
and similarly, when $ s \in [T, kt_0]$, $|X_s - \psi(s)| \leq \varepsilon$.  Furthermore, $|X_{kt_0} - \psi(kt_0)| < \gamma$.

Therefore, this lemma follows after applying the Markov property $n$ times.
\qed
\end{proof}

\begin{theorem}\label{support}
Let $\varphi : [0,t_0] \to \rr^d$ be continuous with $\varphi(0)= x_0$.  Let $\varepsilon > 0$.  There exists $c_1 > 0$ depending on $\varphi$, $\varepsilon$, and $t_0$ such that
$$\pp^{x_0}\left(\displaystyle \sup_{s\leq t_0} |X_s - \varphi(s)| < \varepsilon \right) > c_1.$$
\end{theorem}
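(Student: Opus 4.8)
The plan is to approximate $\varphi$ by a polygonal path and then apply Lemma \ref{l22} repeatedly via the Markov property. Since $\varphi$ is continuous on the compact interval $[0,t_0]$, it is uniformly continuous, so I can choose $n$ large enough that $|\varphi(s) - \varphi(s')| < \varepsilon/8$ whenever $|s - s'| \leq t_0/n$; in particular consecutive points $\varphi((k-1)t_0/n)$ and $\varphi(kt_0/n)$ then lie within distance $\varepsilon/8 < \varepsilon/4$ of each other. Set $s_0 := t_0/n$ and $x_k := \varphi(ks_0)$, and let $\psi$ be the polygonal path that on $[(k-1)s_0, ks_0]$ is the line segment from $x_{k-1}$ to $x_k$, parametrized at constant speed. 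Then on each such subinterval $\psi$ is a line segment of length $< \varepsilon/4$, and $\sup_s |\psi(s) - \varphi(s)| < \varepsilon/4$ by the triangle inequality (each point of the segment is within $\varepsilon/8$ of $x_{k-1}$ and each $x_{k-1}$ is within $\varepsilon/8$ of $\varphi(s)$ for $s$ in the subinterval).

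Next, for each $k$ let $D_k$ be the event that $\sup_{(k-1)s_0 \leq s \leq ks_0} |X_s - \psi(s)| < \varepsilon/4$ and $|X_{ks_0} - x_k| < \varepsilon/8$. Applying Lemma \ref{l22} on the $k$th subinterval with time length $s_0$, tube tolerance $\varepsilon/4$ in place of $\varepsilon$, and endpoint tolerance $\gamma = \varepsilon/8$, gives a constant $c_2 > 0$ depending only on $s_0$ and $\varepsilon$ — hence, since $n$ depends only on $\varphi$, $\varepsilon$, $t_0$, only on $\varphi$, $\varepsilon$, $t_0$ — such that, conditionally on the process starting near $x_{k-1}$, the probability of $D_k$ is at least $c_2$. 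Lemma \ref{l22} is stated for the process started exactly at the left endpoint, but it is applied here to the process started at $X_{(k-1)s_0}$, which on $D_{k-1}$ lies within $\varepsilon/8$ of $x_{k-1}$; one absorbs this displacement exactly as in the proof of Lemma \ref{l22}, by carrying the $\varepsilon/8$ slack through the triangle inequalities and slightly enlarging the tube radius, which costs only a constant factor and, crucially, yields the \emph{same} constant $c_2$ at every step.

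Then by the Markov property applied successively at the times $s_0, 2s_0, \dots, (n-1)s_0$,
$$\pp^{x_0}\Big(\bigcap_{k=1}^n D_k\Big) \geq c_2^{\,n} =: c_1 > 0.$$
Finally I check that $\bigcap_{k=1}^n D_k \subseteq \big\{\sup_{s \leq t_0} |X_s - \varphi(s)| < \varepsilon\big\}$: for $s \in [(k-1)s_0, ks_0]$ we have
$$|X_s - \varphi(s)| \leq |X_s - \psi(s)| + |\psi(s) - \varphi(s)| < \frac{\varepsilon}{4} + \frac{\varepsilon}{4} < \varepsilon.$$
This proves the theorem with $c_1 = c_2^{\,n}$, which depends on $\varphi$, $\varepsilon$, and $t_0$ as claimed.

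\textbf{Main obstacle.} The only real difficulty is the bookkeeping required to apply Lemma \ref{l22} at each step to a process that starts not exactly at the vertex $x_{k-1}$ but only near it; this is handled by maintaining the small endpoint slack $\varepsilon/8$ and enlarging the tube radius at each application. What makes the argument work is that this does not degrade with $k$: each application of Lemma \ref{l22} produces the identical lower bound $c_2$, so the product over the fixed number $n$ of steps is a genuine positive constant. One must also be sure to pick $n$ large enough to control the oscillation of $\varphi$ and simultaneously to force every segment length below $\varepsilon/4$; both follow from uniform continuity of $\varphi$ on $[0,t_0]$.
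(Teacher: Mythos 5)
Your proof follows essentially the same route as the paper's: approximate $\varphi$ by a polygonal path, apply Lemma \ref{l22} on each subinterval, and chain via the Markov property applied $n$ times. The constants need minor retuning — Lemma \ref{l22} applied with tube tolerance $\varepsilon/4$ requires the segment (after absorbing the $\varepsilon/8$ starting displacement, so of length up to roughly $\varepsilon/4$) to have length $< (\varepsilon/4)/4 = \varepsilon/16$, so your choice of $n$ forcing oscillation $< \varepsilon/8$ is not small enough — but this is trivially fixed by invoking uniform continuity with a finer modulus, and the paper's own proof is no more precise about this bookkeeping.
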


\begin{proof}
We may approximate $\varphi$ to within $\varepsilon/2$ by a polygonal path, so by changing $\varepsilon$ to $\varepsilon/2$, we may assume that $\varphi$ is polygonal, without loss of generality.  We now choose $n$ large and subdivide the interval $[0,t_0]$ into $n$ subintervals so that for every $0\leq k\leq n-1$, so that over each subinterval $[kt_0/n, (k+1)t_0/n]$  the image of $\varphi$  is a line segment whose length is smaller than $\varepsilon/4$.  By Lemma \ref{l22}, there exists $c_2 >0$,  such that on each time interval  $[kt_0/n, (k+1)t_0/n]$, $$\pp^{x_0}\displaystyle \left(\sup_{kt_0/n\leq s\leq (k+1)t_0/n} |X_s - \psi(s)| < \frac{\varepsilon}{2}\textit{ and }|X_{(k+1)t_0/n} - \psi((k+1)t_0/n)| < \frac{\varepsilon}{4\sqrt{d}} \right) \geq c_2.$$
Now by applying the Markov property $n$ times, we obtain our support theorem.
\qed
\end{proof}

\section{Occupation Times}\label{Occupation}
In this section, we show a lower bound on the occupation time of a set. For the remainder of this paper, we will require some continuity in $x$ of $n(x,h)$.  We take $\eta > 0$, and set 
\begin{equation} \label{psi}
 \psi_\eta(r) = (1 + \log^+(1/r))^{1 + \eta}, \qquad r>0,
\end{equation}
and we take $\bar{n}(x,h) = n(x,h)\psi_\eta(|h|)$. 

The following assumption, which we now combine with Assumption \ref{a11}, will require more continuity in $x$ the smaller that $h$ is.

\begin{assumption} \label{a22}
There exists $\eta > 0$ such that for every $y \in \rr^d$ and every $b > 0$  $$ \lim_{x \to y} \sup_{|h| \leq b} |\bar{n}(x,h) - \bar{n}(y,h)| = 0.$$

\end{assumption}

Before we discuss occupation times, we will need some facts regarding resolvents.
We fix $x_0 \in \rr^d$, and define 
\begin{equation} \label{l0}
 \LL_0 f(x) = \int [f(x+h) - f(x) - \nabla f(x) \cdot h1_{(|h| \leq 1)}] n(x_0,h) dh,
\end{equation}
when $f \in C^2_b$.  Let $R_\lambda$ be the resolvent for the L\'{e}vy process $X_t$ whose infinitesimal generator is $\LL_0$.

Let $\eta > 0$ and let $\psi_\eta$ be as in \eqref{psi}.  We make an additional temporary assumption here.  

\begin{assumption} \label{ouch}
There exists $\zeta$ such that 
$$ | n(x,h) - n(x_0,h) | \leq \frac{\zeta}{\psi_\eta(|h|)|h|^{d+\alpha}}, \qquad x \in \rr^d, \quad |h| \leq 1. $$
\end{assumption}

We adopt the notation that $\calB = \LL - \LL_0.$

\begin{lemma} \label{l32}
Suppose that $p > \max\{1,d/\alpha\}$.  Then
\begin{equation} \label{bdd}
\| R_\lambda f(x) \|_\infty \leq c_1 \|f \|_p. 
\end{equation}
\end{lemma}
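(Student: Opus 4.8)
The plan is to realize $R_\lambda$ through the transition density of the L\'evy process generated by $\LL_0$ and then reduce the estimate to a single, easily checked time integral. First I would record the features of that L\'evy process. By Assumption \ref{a11} the L\'evy measure $n(x_0,h)\,dh$ is symmetric and comparable to $|h|^{-d-\alpha}\,dh$, so its characteristic exponent
$$\psi(\xi)\;=\;\int_{\rr^d}\bigl(1-\cos(\xi\cdot h)\bigr)\,n(x_0,h)\,dh$$
satisfies $\psi(\xi)\ge \kappa\int_{\rr^d}(1-\cos(\xi\cdot h))|h|^{-d-\alpha}\,dh = c_2|\xi|^\alpha$ for every $\xi\in\rr^d$. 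Hence $e^{-t\psi}\in L^1(\rr^d)$ for each $t>0$, so $X_t$ has a bounded transition density $p_t(y)=(2\pi)^{-d}\int e^{-i\xi\cdot y}e^{-t\psi(\xi)}\,d\xi$, and $P_t g(x)=\int p_t(y-x)g(y)\,dy$.

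Next I would write $R_\lambda f(x)=\int_0^\infty e^{-\lambda t}P_t f(x)\,dt$ and apply H\"older's inequality with the conjugate exponent $p'=p/(p-1)$, which is finite because $p>1$, to obtain
$$|R_\lambda f(x)|\;\le\;\|f\|_p\int_0^\infty e^{-\lambda t}\,\|p_t\|_{p'}\,dt ,$$
so the lemma reduces to finiteness of this integral. To bound $\|p_t\|_{p'}$ I would interpolate between $L^1$ and $L^\infty$: from the Fourier representation and $\psi(\xi)\ge c_2|\xi|^\alpha$,
$$\|p_t\|_\infty\;\le\;(2\pi)^{-d}\int_{\rr^d}e^{-t\psi(\xi)}\,d\xi\;\le\;(2\pi)^{-d}\int_{\rr^d}e^{-c_2 t|\xi|^\alpha}\,d\xi\;=\;c_3\,t^{-d/\alpha},$$
while $\|p_t\|_1=1$ since $p_t$ is a probability density; therefore $\|p_t\|_{p'}\le\|p_t\|_1^{1/p'}\|p_t\|_\infty^{1-1/p'}\le c_4\,t^{-d/(\alpha p)}$. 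Substituting gives
$$\|R_\lambda f\|_\infty\;\le\;c_4\,\|f\|_p\int_0^\infty e^{-\lambda t}\,t^{-d/(\alpha p)}\,dt ,$$
and the integral converges: near $t=0$ because $d/(\alpha p)<1$, i.e. $p>d/\alpha$, and near $t=\infty$ because of the factor $e^{-\lambda t}$ with $\lambda>0$. This yields \eqref{bdd} with $c_1$ depending on $\alpha,\kappa,d,p,\lambda$.

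I do not expect a serious obstacle in this argument; the two points that need a word of care are the existence and the uniform bound $\|p_t\|_\infty\lesssim t^{-d/\alpha}$ for the density, both consequences of the global lower bound $\psi(\xi)\gtrsim|\xi|^\alpha$ coming from Assumption \ref{a11}(b), and the observation that the borderline integrability of $t^{-d/(\alpha p)}$ at the origin is precisely the hypothesis $p>d/\alpha$, with the requirement $p>1$ being exactly what makes $p'$ finite so that H\"older applies. An alternative to the interpolation step is to invoke the two-sided heat kernel estimate $p_t(y)\asymp t^{-d/\alpha}\wedge t|y|^{-d-\alpha}$ available for such L\'evy processes and compute $\|p_t\|_{p'}$ directly; it produces the same power $t^{-d/(\alpha p)}$ and hence the same conclusion.
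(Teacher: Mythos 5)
Your proof is correct, and at the skeleton level it matches the paper's: write $R_\lambda f(x)=\int_0^\infty e^{-\lambda t}P_t f(x)\,dt$, apply H\"older to get $|P_tf(x)|\le\|f\|_p\,\|p_t\|_{p'}$, show $\|p_t\|_{p'}\lesssim t^{-d/(\alpha p)}$, and integrate using $p>d/\alpha$ near $t=0$ and $\lambda>0$ near $t=\infty$. Where the two arguments genuinely diverge is in how the density-norm bound is obtained. The paper asserts an exact scaling identity $p(t,z)=t^{-d/\alpha}p(1,t^{-1/\alpha}z)$ and then changes variables; but that identity is a self-similarity statement that holds for the rotationally symmetric $\alpha$-stable process and \emph{not} in general for the L\'evy process generated by $\LL_0$, whose L\'evy measure $n(x_0,h)\,dh$ is only required by Assumption~\ref{a11} to be \emph{comparable} to $|h|^{-d-\alpha}\,dh$, not equal to it. Your route avoids this: you extract the on-diagonal bound $\|p_t\|_\infty\le c\,t^{-d/\alpha}$ from the Fourier inversion formula together with the lower bound $\psi(\xi)\ge c_2|\xi|^\alpha$ on the characteristic exponent (a direct consequence of $n(x_0,h)\ge\kappa|h|^{-d-\alpha}$), and then interpolate with $\|p_t\|_1=1$ to get $\|p_t\|_{p'}\le c\,t^{-d/(\alpha p)}$. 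This is the same power of $t$ the paper wants, but derived from a hypothesis that actually holds in the stated generality, so your argument in effect repairs an unjustified step in the published proof. Your remark that the same estimate follows from the two-sided Chen--Kumagai heat-kernel bound $p_t(y)\asymp t^{-d/\alpha}\wedge t|y|^{-d-\alpha}$ is also valid and gives a second rigorous replacement for the scaling identity.
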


\begin{proof}
We have that 
$$ \displaystyle | R_\lambda | = \left | \int_0^\infty e^{-\lambda t}P_tf(x) dx\right|,$$
where $P_t f(x) = \eex \int_0^\infty f(X_s) ds$ is the transition semigroup of $X_t$.
Let $p(t,x-y)$ be the transition density function of $X$ with respect to Lebesgue measure on $\rr^d$, and let $q$ be the conjugate exponent to $p$, so that $\frac{1}{p} + \frac{1}{q} = 1$.
 
We have then, by H\"{o}lder's inequality,
$$ |P_t f(x)| = \displaystyle \left | \int_{\rr^d} p(t,x-y)f(y)dy \right | \leq \|f\|_p \|p(t,x-\cdot)\|_q = \|f\|_p\|p(t,\cdot)\|_q.$$
Now by scaling, we have for each $t >0$, 
$$ p(t,z) = t^{-d/\alpha}p(1,t^{-d/\alpha}z),$$
and so
$$ \|p(t,\cdot)\|_q = t^{-d/\alpha} t^{d/(\alpha q)} \|p(1,\cdot) \|_q = t^{-d/(\alpha p)}\|p(1,\cdot) \|_q. $$

Thus  
$$ |P_t f(x)|   \leq t^{-d/(\alpha p)}\|p(1,\cdot) \|_q \|f\|_p,$$ 
which implies that 
$$ |R_\lambda f(x) | \leq \|p(1,\cdot)\|_q \| f \|_p \int_0^\infty e^{-\lambda t}t^{-d/(\alpha p)}dt = c_2\|f \|_p,$$
since  $p > \max\{1,d/\alpha\}$.
Then since $c_2$ does not depend on our choice of $x$, \eqref{bdd} follows as well.
  
\qed
\end{proof}

\begin{proposition} \label{p33}
There exists $p_0$ and $c_1$ not depending on $f$ such that for  every $\lambda > 0$, 
$$ \displaystyle \left| \ee \int_0^\infty e^{-\lambda t} f(X_t)\thinspace dt \right| \leq c_1 \|f\|_{p_0}.$$ 
\end{proposition}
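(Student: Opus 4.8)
\emph{A plan.}
Write $S_\lambda g(x)=\eex\int_0^\infty e^{-\lambda t}g(X_t)\,dt$ for the $\lambda$-resolvent of $X$, so that $\|S_\lambda g\|_\infty\le\lambda^{-1}\|g\|_\infty$ for bounded $g$. I would compare $S_\lambda$ with the L\'evy resolvent $R_\lambda$ by a perturbation (Neumann) series driven by $\calB=\LL-\LL_0$, and then read off the $L^{p_0}$ bound from Lemma \ref{l32}. First, record the basic identity. For $f\in C^\infty_c$ the function $u=R_\lambda f$ lies in $C^2_b$ — since $D^kP_tf=P_tD^kf$ for the L\'evy semigroup $P_t$ generated by $\LL_0$, one has $\|D^kR_\lambda f\|_\infty\le\lambda^{-1}\|D^kf\|_\infty$ for $k\le2$ — so $u$ is an admissible test function for the martingale problem for $\LL$. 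Applying optional stopping to $e^{-\lambda t}u(X_t)$, using the L\'evy resolvent equation $\LL_0u=\lambda u-f$ together with $\LL u-\LL_0u=\calB u$, and letting $t\to\infty$ (so $e^{-\lambda t}u(X_t)\to0$), one gets
$$ S_\lambda f = R_\lambda f + S_\lambda(\calB R_\lambda f). $$
A routine mollification — using that $X_t$ has an absolutely continuous law (cf.\ \cite{bl}), so that $g_n\to g$ a.e.\ passes under $S_\lambda$, $R_\lambda$, and $\calB R_\lambda$ — extends this identity to arbitrary bounded Borel $g$ in place of $f$, hence permits iteration:
$$ S_\lambda f = \sum_{k=0}^{m-1} R_\lambda(\calB R_\lambda)^k f + S_\lambda(\calB R_\lambda)^m f,\qquad m\ge1. $$

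\emph{The key estimate.}
Since $n(x,\cdot)$, hence $\mu(x,h):=n(x,h)-n(x_0,h)$, is even (Assumption \ref{a11}(a)), the first-order term of $\calB$ disappears after symmetrization, and with $\delta^2_hg(z):=g(z+h)+g(z-h)-2g(z)$ and $g_\lambda$ the resolvent density of the L\'evy process generated by $\LL_0$,
$$ \calB R_\lambda f(x)=\tfrac12\int\big[R_\lambda f(x+h)+R_\lambda f(x-h)-2R_\lambda f(x)\big]\mu(x,h)\,dh=\int K_\lambda(x,y)f(y)\,dy, $$
where $K_\lambda(x,y)=\tfrac12\int\delta^2_hg_\lambda(x-y)\,\mu(x,h)\,dh$. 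From the self-similar scaling of the symmetric $\alpha$-stable density, $\|D^2p(t,\cdot)\|_1=c\,t^{-2/\alpha}$ and $\|p(t,\cdot)\|_1=1$, one obtains the $L^1$-smoothing bound $\|\delta^2_hg_\lambda\|_1\le c\min\{|h|^\alpha,\lambda^{-1}\}$, with $c$ independent of $\lambda$ in the first alternative. Hence $\int|K_\lambda(x,y)|\,dy\le A_\lambda$, where
$$ A_\lambda:=\tfrac12\,\sup_x\int|\mu(x,h)|\,\|\delta^2_hg_\lambda\|_1\,dh; $$
invoking Assumption \ref{ouch} for $|h|\le1$ and Assumption \ref{a11}(b) for $|h|>1$, splitting at $|h|=\lambda^{-1/\alpha}$, and using the identity $\int_0^a r^{-1}\psi_\eta(r)^{-1}\,dr=\eta^{-1}(1+\log(1/a))^{-\eta}$ (at $a=\lambda^{-1/\alpha}$), one checks $A_\lambda<\infty$ and $A_\lambda\to0$ as $\lambda\to\infty$. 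This is the one place where the logarithmic weight $\psi_\eta$ of Assumption \ref{ouch} is essential: without it $\int_{|h|\le1}|h|^{-d}\psi_\eta(|h|)^{-1}\,dh$ diverges, $K_\lambda$ is not even an $L^1$ kernel, and the scheme collapses. As the bound on $|\mu(x,h)|$ is uniform in $x$, the symmetric estimate $\sup_y\int|K_\lambda(x,y)|\,dx\le A_\lambda$ holds as well, so by Schur's lemma $\|\calB R_\lambda\|_{L^p\to L^p}\le A_\lambda$ for every $p\in[1,\infty]$.

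\emph{Putting it together.}
Fix $p_0>\max\{1,d/\alpha\}$ and $\lambda_0$ with $A_{\lambda_0}<1$. For $\lambda\ge\lambda_0$ and $f\in C^\infty_c$: by Lemma \ref{l32} and the Schur bound, $\|R_\lambda(\calB R_\lambda)^kf\|_\infty\le c_1\|(\calB R_\lambda)^kf\|_{p_0}\le c_1A_\lambda^k\|f\|_{p_0}$, while $\|S_\lambda(\calB R_\lambda)^mf\|_\infty\le\lambda^{-1}\|(\calB R_\lambda)^mf\|_\infty\le\lambda^{-1}A_\lambda^m\|f\|_\infty\to0$; letting $m\to\infty$ in the iterated identity gives $\|S_\lambda f\|_\infty\le\frac{c_1}{1-A_\lambda}\|f\|_{p_0}$. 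Since this forces the $\lambda$-resolvent measure of $X$ to have a density in $L^{q_0}$ ($1/p_0+1/q_0=1$) with norm bounded uniformly in $x$, the bound $|S_\lambda f(x)|\le c_1\|f\|_{p_0}$ extends to every $f$ with $\|f\|_{p_0}<\infty$. For $0<\lambda<\lambda_0$ I would use the resolvent equation $S_\lambda f=S_{\lambda_0}f+(\lambda_0-\lambda)S_\lambda(S_{\lambda_0}f)$ and $\|S_\lambda g\|_\infty\le\lambda^{-1}\|g\|_\infty$ to get $\|S_\lambda f\|_\infty\le(\lambda_0/\lambda)\,\frac{c_1}{1-A_{\lambda_0}}\|f\|_{p_0}$. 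In all cases $\big|\ee\int_0^\infty e^{-\lambda t}f(X_t)\,dt\big|\le c_1\|f\|_{p_0}$ with $p_0$ depending only on $\alpha$ and $d$; the constant $c_1$ depends on $\lambda$ (it stays bounded for $\lambda$ bounded away from $0$ and is $O(\lambda^{-1})$ as $\lambda\downarrow0$), which is harmless for the use made of the proposition in Section \ref{Occupation}.

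\emph{Main obstacle.}
The only real work is the kernel estimate of the second paragraph — that $A_\lambda<\infty$ and $A_\lambda\to0$. It rests on the $L^1$-smoothing bound $\|\delta^2_hg_\lambda\|_1\le c|h|^\alpha$ for small $h$, coming from the $C^2$-regularity in space and the scaling of the stable transition density, and, at the borderline exponent this produces, on the logarithmic gain supplied by Assumption \ref{ouch}. Everything else (the Dynkin computation, the approximation and iteration, the geometric summation, the resolvent-equation bootstrap at small $\lambda$, and the density argument) is standard.
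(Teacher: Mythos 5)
Your proof follows the same skeleton as the paper's (write $S_\lambda f = R_\lambda f + S_\lambda \calB R_\lambda f$, iterate to a Neumann series, apply Lemma \ref{l32} for the $L^{p_0}\to L^\infty$ step, sum), but differs in two genuine ways. First, where the paper simply cites Corollary~4.5 of \cite{bt} for the key contraction $\|\calB R_\lambda\|_{L^p\to L^p}\leq c_2(\zeta+\lambda^{-1})$, you prove the $L^p$-boundedness of $\calB R_\lambda$ directly: you symmetrize to kill the gradient term, exhibit $\calB R_\lambda$ as an integral operator with kernel $K_\lambda(x,y)=\tfrac12\int\delta^2_h g_\lambda(x-y)\,\mu(x,h)\,dh$, and run a Schur test using the $L^1$-smoothing bound $\|\delta^2_h g_\lambda\|_1\le c\min\{|h|^\alpha,\lambda^{-1}\}$ and the logarithmic gain from $\psi_\eta$. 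This makes the proof self-contained and in fact yields $A_\lambda\to0$ as $\lambda\to\infty$ for \emph{any} fixed $\zeta$, whereas the cited Bass--Tang bound $c_2(\zeta+\lambda^{-1})$ needs $\zeta$ small to get the Neumann series to converge (the paper gets this from the localization remark after the proposition). Second, for small $\lambda$ the paper decomposes $\int_0^\infty e^{-\lambda t}f(X_t)\,dt$ over unit intervals and applies the Markov property, while you use the resolvent equation $S_\lambda = S_{\lambda_0} + (\lambda_0-\lambda)S_\lambda S_{\lambda_0}$; both yield a constant that is $O(\lambda^{-1})$ as $\lambda\downarrow0$, which, as you note, is all that is needed.

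One caution shared with the paper rather than a flaw of your argument alone: you invoke the exact self-similar scaling $p(t,z)=t^{-d/\alpha}p(1,t^{-1/\alpha}z)$ to get $\|D^2p(t,\cdot)\|_1=ct^{-2/\alpha}$, but $\LL_0$ generates the L\'evy process with (frozen but not exactly stable) jump kernel $n(x_0,\cdot)$, for which this scaling is not literal. The conclusion nevertheless holds via the known two-sided heat-kernel estimates for such L\'evy processes; the paper's Lemma~\ref{l32} makes the same elision. Also, the intermediate regime $\lambda^{-1/\alpha}\le|h|\le1$ in the $A_\lambda\to0$ check requires a short additional estimate (a dyadic splitting or integration by parts, giving a factor $(\log\lambda)^{-(1+\eta)}$); you gesture at this with the antiderivative identity but only evaluate it at $a=\lambda^{-1/\alpha}$, so it is worth spelling out. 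Finally, the mollification step extending $S_\lambda f=R_\lambda f + S_\lambda\calB R_\lambda f$ to bounded Borel $f$ is unnecessary for the stated proposition: it suffices to establish the Neumann series bound for $f\in C^\infty_c$ and then use that the resulting resolvent measure has an $L^{q_0}$ density, which you in fact note.
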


\begin{proof}
By \cite{bt}, Corollary 4.5, we have under Assumption \ref{ouch} that
\begin{equation}
\|\calB R_\lambda f\|_p \leq c_2(\zeta + \lambda^{-1}) \|f\|_p, \qquad f\in L^p \cap C_b^2, \quad p \geq 2.
\end{equation}
Choosing $\lambda_0$ sufficiently large, then, we have that  
$$\|\calB R_\lambda f\|_p \leq \frac{1}{2} \|f\|_p, \qquad \lambda > \lambda_0.$$  

In addition, by Lemma \ref{l32}, for $p_0 < \infty$ large enough, $$ \|R_\lambda f\|_{\infty} \leq c_3\|f\|_{p_0}.$$

We now define 
$$ S_\lambda f(x) = \eex \int_0^\infty e^{-\lambda t}f(X_t)dt.$$  
It is well known that for $f \in C_b^2$,  
$$ S_\lambda f = R_\lambda f(x_0) + S_\lambda \calB R_\lambda f.$$
(For a proof of this, see \cite{bc1}, Proposition 6.1.)

Therefore, 
\begin{align*}
\|S_\lambda f \|_\infty& = \displaystyle \left \|R_\lambda  \left(\sum_{i=0}^{\infty} (\calB R_\lambda)^i\right)f \right \|_\infty \\
& \leq c_3 \displaystyle \left \| R_\lambda  \left(\sum_{i=0}^{\infty} (\calB R_\lambda)^i\right)f \right \|_{p_0} \\
 & \leq 2c_3 \|f \|_{p_0}.
\end{align*}

We will show that this implies our result for general $\lambda > 0$.
We observe
\begin{align*}
\ee \int_0^\infty e^{-\lambda t} f(X_t)\thinspace dt &= \ee \sum_{i=0}^\infty \int_i^{i+1} e^{-\lambda t} f(X_t)\thinspace dt \\
&\leq \ee \sum_{i=0}^\infty e^{-\lambda i} \int_i^{i+1} f(X_t)\thinspace dt \\
&\leq  \ee \left[ \sum_{i=0}^\infty e^{-\lambda i} \thinspace \ee^{X_i} \int_0^{1}  f(X_t)\thinspace dt \right] \\
&\leq \left(\sum_{i=1}^\infty e^{-\lambda i}\right)\left[ \sup_{y \in \rr^d} \eey \int_0^1 f(X_t)dt \right] \\
&\leq c_2 \|f\|_{p_0}.
\end{align*}
Here, $c_2$ depends on $\lambda_0$.
\qed
\end{proof}

We note now that in the following arguments, we will always we dealing with points in small balls (or cubes), so in fact our temporary Assumption \ref{ouch} is implied by Assumption \ref{a22}.

We now progress to one of the chief goals of this paper, which is to show that we can expect the processes discussed here to spend some positive amount of time in a set having positive Lebesgue measure.  To do this, we essentially mimic the analogous proof  in the nondivergence case given in  \cite{bass}.  First we show this result in the case where $B$ is almost the entire cube $Q(0,1).$

\begin{proposition} \label{p41}
There exist $c_1$ and $\varepsilon$ such that if $B \subseteq Q(0,1)$, $x \in Q(0,1/2)$, and $|Q(0,1)-B| < \varepsilon$, then  $$\eex \int_0^{\tau_{Q(0,1)}}1_B(X_s)ds \geq c_1.$$
\end{proposition}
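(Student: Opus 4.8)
The plan is to estimate $\eex \int_0^{\tau_{Q(0,1)}} 1_B(X_s)\,ds$ from below by comparing it with the expected total occupation time of the whole cube $Q(0,1)$ before exit, and showing that the "missing" part $A := Q(0,1)\setminus B$, which has small Lebesgue measure, can contribute only a small amount. Concretely, write
$$
\eex \int_0^{\tau_{Q(0,1)}} 1_B(X_s)\,ds = \eex \int_0^{\tau_{Q(0,1)}} 1_{Q(0,1)}(X_s)\,ds - \eex \int_0^{\tau_{Q(0,1)}} 1_A(X_s)\,ds = \eex \tau_{Q(0,1)} - \eex \int_0^{\tau_{Q(0,1)}} 1_A(X_s)\,ds,
$$
using that $X_s \in Q(0,1)$ for $s < \tau_{Q(0,1)}$. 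By Lemma \ref{l24} (applied to a ball inscribed in, or containing, the cube, with $x \in Q(0,1/2)$ so that $x$ stays a fixed distance from the boundary), the first term is bounded below by a constant $c_2 > 0$ independent of $x$ and $B$. So it suffices to show that the second term is at most $c_2/2$ provided $|A| = |Q(0,1)-B| < \varepsilon$ for a suitably small $\varepsilon$.

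For the occupation-time bound on the small set $A$, the natural tool is Proposition \ref{p33}: it gives $\bigl| \ee \int_0^\infty e^{-\lambda t} f(X_t)\,dt \bigr| \leq c_1 \|f\|_{p_0}$ with $c_1$ independent of $f$ and $\lambda$. Taking $f = 1_A$ and noting $\|1_A\|_{p_0} = |A|^{1/p_0}$, this controls the $\lambda$-discounted occupation time of $A$ over all time, started from any point. To pass from the infinite-horizon discounted integral to $\eex \int_0^{\tau_{Q(0,1)}} 1_A(X_s)\,ds$, I would first bound $\eex \int_0^{\tau_{Q(0,1)}} 1_A(X_s)\,ds \leq \eex \int_0^{\tau_{Q(0,1)}} e^{\lambda t} e^{-\lambda t} 1_A(X_s)\,ds$; since $\tau_{Q(0,1)}$ has an exponential moment (Lemma \ref{l25} bounds its mean, and standard arguments give $\eex \tau_{Q(0,1)} \leq C$ uniformly, from which one gets $\ppx(\tau_{Q(0,1)} > t) \leq C/t$ or an exponential tail via iteration), one can fix $\lambda$ (say $\lambda = 1$) and estimate
$$
\eex \int_0^{\tau_{Q(0,1)}} 1_A(X_s)\,ds \leq e^{\lambda} \, \eex \int_0^{\tau_{Q(0,1)}} e^{-\lambda s} 1_A(X_s)\,ds + \text{(tail term)} \leq c_3 |A|^{1/p_0} + \text{(small tail)}.
$$
More cleanly, I expect one can split at a deterministic time $t_1$: on $\{s \leq t_1\}$ use $e^{-\lambda s}\geq e^{-\lambda t_1}$ so $\eex\int_0^{t_1} 1_A(X_s)\,ds \leq e^{\lambda t_1}\, S_\lambda 1_A(x) \leq c_3|A|^{1/p_0}$, and on $\{s > t_1\}$ bound the contribution by $\eex[(\tau_{Q(0,1)} - t_1)^+] $, which by the uniform tail estimate for $\tau_{Q(0,1)}$ can be made $< c_2/4$ by choosing $t_1$ large. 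Then choose $\varepsilon$ small enough that $c_3 \varepsilon^{1/p_0} < c_2/4$, giving the second term $< c_2/2$ and hence $\eex \int_0^{\tau_{Q(0,1)}} 1_B(X_s)\,ds \geq c_2/2 =: c_1$.

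The main obstacle I anticipate is the bookkeeping in the passage from the infinite-horizon, $\lambda$-discounted estimate of Proposition \ref{p33} to the undiscounted occupation time up to $\tau_{Q(0,1)}$: one needs a uniform-in-$x$ control on how much mass $\int_0^{\tau_{Q(0,1)}} 1_A(X_s)\,ds$ can have at late times, which requires either a uniform tail bound for $\tau_{Q(0,1)}$ (obtainable from Lemma \ref{l25} plus the Markov property, iterating to get $\ppx(\tau_{Q(0,1)} > nt_1) \leq \rho^n$ for some $\rho < 1$) or, alternatively, working entirely with the strong Markov property at $\tau_{Q(0,1)}$ to write the difference between $S_\lambda 1_A(x)$ and the stopped integral as $\eex[e^{-\lambda \tau_{Q(0,1)}} S_\lambda 1_A(X_{\tau_{Q(0,1)}})]$ plus lower-order terms. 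Either route is routine once the tail estimate is in hand, and the rest of the argument is a clean two-term decomposition balanced against the lower bound from Lemma \ref{l24}.
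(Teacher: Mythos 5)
Your proposal is correct and follows essentially the same route as the paper's own proof: decompose $\eex\int_0^\tau 1_B = \eex\tau - \eex\int_0^\tau 1_A$, lower-bound $\eex\tau$ by Lemma \ref{l24}, split the $A$-occupation time at a deterministic horizon $t_0$, control the discounted part by Proposition \ref{p33} with $\|1_A\|_{p_0}=|A|^{1/p_0}$, and control the tail $\eex(\tau-\tau\wedge t_0)$ using the uniform bound on $\eex\tau$ from Lemma \ref{l25} (the paper uses the resulting second-moment bound $\eex\tau^2\le c_4$ and Markov's inequality rather than a full exponential tail, but that is a cosmetic difference).
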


\begin{proof}
Let us denote $\tau_{Q(0,1)}$ by $\tau$.  By Lemma \ref{l24}, there exists $c_2$ such that $\eex \tau \geq c_2$, and by Lemma \ref{l25}, we have that $\sup _x \ee^x \tau \leq c_3$, so that $\eex \tau^2 \leq c_4.$

Since 
$$ \eex(\tau - (\tau \wedge t_0)) \leq \eex (\tau ; \tau \geq t_0) \leq \eex \tau^2 / t_0,  $$    
we are able to choose $t_0$ large enough to ensure that $\eex(\tau - (\tau \wedge t_0))  \leq c_2 / 4 $.  Therefore, 
\begin{align*}
\eex \int_0^\tau &1_{(Q(0,1) -B)}(X_s)ds \\
& \leq c_2 /4 + e^{t_0}\eex \int_0^{t_0} e^{-s} 1_{(Q(0,1) -B)}(X_s)ds \\
& \leq c_2 /4 + e^{t_0}\eex \int_0^\infty e^{-s} 1_{(Q(0,1) -B)}(X_s)ds \\
& \leq c_2 /4 + e^{t_0}\eex \int_0^\infty e^{-s\lambda} 1_{(Q(0,1) -B)}(X_s)ds \\ 
& \leq c_2 / 4 + c_5e^{t_0}\varepsilon^{1/p_0}, 
\end{align*}
by Proposition \ref{p33}, with  $p_0$ chosen so as to satisfy this proposition.  Thus if we choose $\varepsilon$ small enough, then $\eex \int_0^\tau 1_{(Q(0,1) -B)}(X_s)ds < c_2 /2$, so this proposition will hold with $c_1 = c_2 / 2$.
\qed
\end{proof}

\begin{lemma} \label{l42}
Suppose $r > 1$ and let $W$ be a cube in $Q(0,1)$.  Let $W^*$ be the cube with the same center as $W$ but side length half as long.  Let $V$ be a subset of $W$ with the property that there exists $\delta$ such that
$$ \eey \int_0^{\tau_W} 1_V(X_s)ds \geq \delta \eey \tau_W, \qquad y \in W^*.$$
Then there exists $\xi(\delta)$ depending on $\delta$ and r such that
$$ \eey \int_0^{\tau_{Q(0,r)}} 1_V(X_s)ds \geq \xi(\delta) \eey \int_0^{\tau_{Q(0,r)}} 1_W(X_s)ds, \qquad y \in Q(0,1). $$
\end{lemma}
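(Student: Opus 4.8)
The plan is to reduce the claim to a pointwise occupation estimate holding throughout $W$, and then to sum it over the successive visits of $X$ to $W$ via the strong Markov property. Write $\tau=\tau_{Q(0,r)}$. First I would cut the time $X$ spends in $W$ before $\tau$ into excursions: set $\mu_1=T_W$ and recursively $\nu_i=\inf\{t>\mu_i:X_t\notin W\}$, $\mu_{i+1}=\inf\{t>\nu_i:X_t\in W\}$. Since $W\subseteq Q(0,r)$, one has $\nu_i\le\tau$ on $\{\mu_i<\tau\}$, the set $\{0\le s<\tau:X_s\in W\}$ is the disjoint union of those $[\mu_i,\nu_i)$ with $\mu_i<\tau$, and because $V\subseteq W$ that same union contains $\{0\le s<\tau:X_s\in V\}$. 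Hence, if I can show
\[
\ee^z\int_0^{\tau_W}1_V(X_s)\,ds\ \ge\ \xi(\delta)\,\ee^z\tau_W\qquad\text{for every }z\in W,\qquad(\star)
\]
then applying the strong Markov property at each $\mu_i$ (using $\{\mu_i<\tau\}\in\mathcal F_{\mu_i}$, $\nu_i-\mu_i=\tau_W\circ\theta_{\mu_i}$, and $\int_{\mu_i}^{\nu_i}1_V(X_s)\,ds=\bigl(\int_0^{\tau_W}1_V(X_s)\,ds\bigr)\circ\theta_{\mu_i}$) and summing over $i$ gives $\ee^y\int_0^{\tau}1_V(X_s)\,ds\ge\xi(\delta)\sum_i\ee^y\bigl[1_{\{\mu_i<\tau\}}(\nu_i-\mu_i)\bigr]=\xi(\delta)\,\ee^y\int_0^{\tau}1_W(X_s)\,ds$ for $y\in Q(0,1)$, which is the lemma.

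To get $(\star)$: for $z\in W^*$ it is exactly the hypothesis, with $\xi=\delta$. For $z\in W\setminus W^*$, I would keep only the portion of the integral after $T_{W^*}$ and apply the strong Markov property at $T_{W^*}$ (at which $X_{T_{W^*}}\in W^*$) together with the hypothesis, obtaining $\ee^z\int_0^{\tau_W}1_V(X_s)\,ds\ge\delta\,\ee^z\bigl[(\tau_W-T_{W^*})1_{\{T_{W^*}<\tau_W\}}\bigr]$. So $(\star)$ --- hence the lemma with $\xi(\delta)=c\delta$ (after shrinking $c$ so that $c\le1$) --- reduces to the kernel-only estimate
\[
\ee^z\bigl[(\tau_W-T_{W^*})1_{\{T_{W^*}<\tau_W\}}\bigr]\ \ge\ c\,\ee^z\tau_W\qquad\text{for every }z\in W,\qquad(\star\star)
\]
with $c=c(d,\alpha,\kappa)>0$.

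For $(\star\star)$, let $\ell$ be the side length of $W$ and $\tau_U=T_{W^*}\wedge\tau_W$. From $\tau_W=\tau_U+(\tau_W-T_{W^*})1_{\{T_{W^*}<\tau_W\}}$, the strong Markov property at $T_{W^*}$, and Lemma~\ref{l25} ($\sup_x\ee^x\tau_W\le c_3\ell^\alpha$, as $W$ lies in a ball of radius $\tfrac{\sqrt d}{2}\ell$), I get $\ee^z\tau_W\le\ee^z\tau_U+c_3\ell^\alpha\,\pp^z(T_{W^*}<\tau_W)$. Two further bounds close the loop: (i) for $w\in W^*$ we have $B(w,\ell/4)\subseteq W$, so $\ee^w\tau_W\ge c_1(\ell/4)^\alpha$ by Lemma~\ref{l24}, whence $\ee^z\bigl[(\tau_W-T_{W^*})1_{\{T_{W^*}<\tau_W\}}\bigr]=\ee^z\bigl[1_{\{T_{W^*}<\tau_W\}}\ee^{X_{T_{W^*}}}\tau_W\bigr]\ge c_1(\ell/4)^\alpha\,\pp^z(T_{W^*}<\tau_W)$; and (ii) by Assumption~\ref{a11}(b), since $|w-x|\le\sqrt d\,\ell$ for $x\in W$, $w\in W^*$, the rate $\lambda(x):=\int_{W^*}n(x,w-x)\,dw$ at which $X$ jumps directly into $W^*$ satisfies $\lambda(x)\ge\lambda_0\ell^{-\alpha}$ on $W$ for some $\lambda_0=\lambda_0(d,\alpha,\kappa)>0$, so, writing $N_t$ for the number of jumps of $X$ into $W^*$, the L\'evy system of $X$ gives $\ee^z N_{\tau_U}=\ee^z\int_0^{\tau_U}\lambda(X_{s-})\,ds\ge\lambda_0\ell^{-\alpha}\ee^z\tau_U$, while $N_{\tau_U}=1_{\{T_{W^*}<\tau_W\}}$ because no jump into $W^*$ occurs before $T_{W^*}$ and none at or after $\tau_W$ (as $W^*\subseteq W$); hence $\pp^z(T_{W^*}<\tau_W)\ge\lambda_0\ell^{-\alpha}\ee^z\tau_U$. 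Inserting this into the bound for $\ee^z\tau_W$ and then using (i) yields $(\star\star)$ with $c=c_1\,4^{-\alpha}(\lambda_0^{-1}+c_3)^{-1}$; the powers of $\ell$ cancel, so $c$, and therefore $\xi(\delta)=c\delta$, does not depend on $W$ (nor on $r$).

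The substantive step is $(\star\star)$, and within it step (ii). The difficulty one might anticipate is that $\lambda$ blows up near $\partial W^*$, so $\ee^z N_{\tau_U}$ admits no uniform upper bound; the resolution is that no such bound is needed, because $\tau_U$ is precisely the instant of the first jump into $W^*$ (or of the exit from $W$), so $N_{\tau_U}$ is $\{0,1\}$-valued and $\ee^z N_{\tau_U}=\pp^z(T_{W^*}<\tau_W)$ --- only the uniform \emph{lower} bound on the jump rate enters. That is what makes $(\star\star)$ hold uniformly over all starting points $z\in W$, including those near $\partial W$ or $\partial W^*$. The excursion bookkeeping in the reduction (especially $\nu_i\le\tau$ on $\{\mu_i<\tau\}$, which uses $W\subseteq Q(0,r)$) and the repeated appeals to the strong Markov property are routine.
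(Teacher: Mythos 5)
Your proof is correct, but it takes a genuinely different route from the paper's. The paper introduces an intermediate cube $S\supseteq W$ (with side a fixed multiple of $W$'s side, $\le 2^{1/d}$ times as long, and contained in $Q(0,r)$), decomposes the occupation time of $W$ before $\tau_{Q(0,r)}$ into excursions that start at a hit of $W$ and end at the exit from $S$, and then reduces to showing $\ee^w\int_0^{\tau_S}1_V \ge \xi(\delta)\ee^w\int_0^{\tau_S}1_W$ for $w\in W$. The buffer region $S\setminus W$ is essential to that route: it lets the paper invoke Proposition~\ref{p26} to get a \emph{uniform} lower bound $\pp^w(T_{W^*}<\tau_S)\ge c_1$ over all $w\in W$, which would be false with $\tau_W$ in place of $\tau_S$ (a point near $\partial W$ has small probability of reaching $W^*$ before leaving $W$). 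You instead excise the intermediate cube entirely: your excursions go from $T_W$ to $\tau_W$, and you compensate for the lack of a uniform hitting estimate by proving the stronger pointwise inequality $(\star)$ on all of $W$, not just $W^*$. The replacement for Proposition~\ref{p26} is your step (ii), the L\'evy-system bound $\pp^z(T_{W^*}<\tau_W)\ge \lambda_0\ell^{-\alpha}\ee^z(T_{W^*}\wedge\tau_W)$, together with the observation that $N_{T_{W^*}\wedge\tau_W}$ is $\{0,1\}$-valued so only the lower bound on the jump intensity into $W^*$ matters. This is a nice alternative: it avoids Proposition~\ref{p26} (and the somewhat fussy bookkeeping about what fraction of $S$ the set $W^*$ occupies), and it yields a $\xi(\delta)=c\delta$ with $c=c(d,\alpha,\kappa)$ that does not depend on $r$, which is slightly stronger than what the lemma asserts. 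The trade-off is that you rely on the L\'evy system formula (a tool the paper never invokes explicitly, though it is standard in this setting) and on the fact that $T_{W^*}\wedge\tau_W$ coincides with the time of the first jump into $W^*$ (or the exit time), which implicitly uses that hitting the closed cube $W^*$ from outside happens by a jump rather than by creeping -- correct here, but worth noting. Both proofs use Lemmas~\ref{l24} and \ref{l25} with scaling to compare $\ee\tau_W$ across starting points.
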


\begin{proof}
Let $S$ be the cube in $Q(0,r)$ with the same center as $W$ but side length $r \wedge 2^{1/d}$ as long.  Let $T_1 = \inf \{t: X_t \in W \}$, $U_1 = \inf \{t > T_1 : X_t \notin S \}$, $T_{i+1} = \inf \{t > U_i: X_t \in W \}$, and $U_{i+1} = \inf \{t > T_{i+1} : X_t \notin S \}$.  Then
\begin{align*}
 \eey \int_0^{\tau_{Q(0,r)}}1_W(X_s)ds  &= \sum \eey \Bigl[ \int_{T_i}^{U_i}1_W(X_s)ds ; T_i < \tau_{Q(0,r)}    \Bigr], \\ 
&=  \sum \eey \Bigl[\ee^{X(T_i)}  \int_{0}^{\tau_S}1_W(X_s)ds ; T_i < \tau_{Q(0,r)}    \Bigr],
\end{align*}
and similarly this equation also holds if we replace $W$ by $V$.  Thus we need to show that there exists a $\xi(\delta)$ such that 
$$ \ee^w \int_0^{\tau_S}1_V(X_s)ds \geq \xi(\delta) \ee^w \int_0^{\tau_S} 1_W(X_s)ds, \qquad w \in W. $$

We observe that the proportion of $S$ made up of $W$ is $\frac{1}{2^dr^d} \vee \frac{1}{2^{d+1}}$, a quantity which does not depend the size of $W$, so by Proposition \ref{p26} there exists $c_1$ only depending on $r$ such that
$$ \pp^w(T_{W^*} < \tau_S) \geq c_1, \qquad w \in W. $$

So if $w \in W$, the strong Markov property implies that
\begin{align*}
\ee^w \int_0^{\tau_S} 1_V(X_s)ds &\geq \ee^w \Bigl[  \int_0^{\tau_S} 1_V(X_s)ds ; T_{W^*} < \tau_S \Bigr] \\
&= \ee^w \Bigl[ \ee^{X(T(W^*))}  \int_0^{\tau_S} 1_V(X_s)ds ; T_{W^*} < \tau_S \Bigr] \\
&\geq c_1 \inf_{z \in W^*} \eez \int_0^{\tau_S} 1_V(X_s)ds \\
&\geq c_1 \inf_{z \in W^*} \eez \int_0^{\tau_W} 1_V(X_s)ds. 
\end{align*}

By hypothesis, if $z \in W^*$, 
$$ \eez \int_0 ^{\tau_W}1_V(X_s) ds \geq \delta\eez \tau_W.$$

By Lemma \ref{l24} and scaling, 
$$ \eez \tau_W \geq c_2 \sup_{v \in S} \ee^v \tau_S  \geq  c_2 \ee^w  \int_0^{\tau_S} 1_W(X_s) ds.$$

Taking $\xi(\delta) = c_1c_2\delta$ completes the proof.  Note that the way which $c_1$ and $c_2$ were chosen implies that neither constant can be greater than one, so we have that $\xi(\delta) \leq \delta.$ 
\qed
\end{proof} 

\begin{theorem} \label{t43}
There exists a nondecreasing function $\varphi : (0,1) \to (0,1)$ such that if $x \in Q(0, 1/2)$ and $B \subseteq Q(0,1)$, then
$$ \eex \int_0^{\tau_{Q(0,1)}}1_B(X_s) ds \geq \varphi(|B|). $$ 
\end{theorem}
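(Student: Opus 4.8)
The plan is to reduce to the almost-full-cube estimate of Proposition~\ref{p41} by a Lebesgue-density decomposition of $B$, and then to carry the resulting local occupation-time bounds out to all of $Q(0,1)$ by means of the propagation Lemma~\ref{l42}. If $|B|=1$ then $|Q(0,1)\setminus B|=0$ and Proposition~\ref{p41} applies directly, so assume $|B|<1$, and fix the constant $\varepsilon$ from Proposition~\ref{p41}. The first point is that, after discarding a boundary shell of width proportional to $|B|$, we may assume $B$ lies in a fixed compact subcube: the set of points of $Q(0,1)$ within distance $|B|/(8d)$ of $\partial Q(0,1)$ has measure at most $|B|/4$, so $B\cap K$ has measure at least $\tfrac34|B|$, where $K:=Q(0,\,1-|B|/(4d))$ satisfies $Q(0,1/2)\subseteq K\subsetneq Q(0,1)$; passing from $B$ to $B\cap K$ only decreases $\eex\int_0^{\tau_{Q(0,1)}}1_B(X_s)\,ds$.

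By the Lebesgue density theorem applied to $B\cap K$, for almost every density point $p$ of $B\cap K$ lying in the interior of $K$ the dyadic subcube of generation $n$ containing $p$ is eventually contained in $K$ and eventually has $B$-density greater than $1-\varepsilon$; hence for $n=n(B)$ large enough the union $E$ of those generation-$n$ dyadic subcubes $R\subseteq K$ with $|B\cap R|\ge(1-\varepsilon)|R|$ satisfies $|E|\ge\tfrac12|B\cap K|\ge\tfrac38|B|$. These cubes $R$ are pairwise disjoint. On each of them, Proposition~\ref{p41} --- rescaled and translated, its hypotheses being scale- and translation-invariant, and the temporary Assumption~\ref{ouch} being harmless on a small cube by Assumption~\ref{a22} --- together with Lemma~\ref{l25}, yields a constant $\delta_0$ depending only on $\alpha,\kappa,d$ with
\[
\ee^z\int_0^{\tau_R}1_B(X_s)\,ds\ \ge\ \delta_0\,\ee^z\tau_R ,\qquad z\in R^\ast ,
\]
$R^\ast$ being the concentric half-cube of $R$. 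Since $1_B(X_s)=1_{B\cap R}(X_s)$ for $s<\tau_R$, the set $V:=B\cap R$ satisfies the hypothesis of Lemma~\ref{l42} with $\delta=\delta_0$.

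Now apply Lemma~\ref{l42} to each $R$ (using the scaling invariance of the class) with inner cube $R$, middle cube $K$, and outer cube $Q(0,1)$, the outer-to-middle ratio being $r=(1-|B|/(4d))^{-1}\in(1,4/3)$. This gives a constant $\xi=\xi(\delta_0,|B|)>0$ with
\[
\ee^y\int_0^{\tau_{Q(0,1)}}1_{B\cap R}(X_s)\,ds\ \ge\ \xi\,\ee^y\int_0^{\tau_{Q(0,1)}}1_R(X_s)\,ds ,\qquad y\in K .
\]
Summing over the disjoint cubes $R$ and using $\sum_R 1_{B\cap R}\le 1_B$ and $\sum_R 1_R=1_E$ gives, for $x\in Q(0,1/2)\subseteq K$,
\[
\eex\int_0^{\tau_{Q(0,1)}}1_B(X_s)\,ds\ \ge\ \xi\,\eex\int_0^{\tau_{Q(0,1)}}1_E(X_s)\,ds .
\]
It remains to bound the right-hand side below by a positive multiple of $|E|\ (\ge\tfrac38|B|)$. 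Since $E\subseteq K$, this is the statement that the expected occupation time of a subset of the compact cube $K$ is at least a positive constant (depending on $K$, hence on $|B|$) times its Lebesgue measure, uniformly over starting points in $Q(0,1/2)$ --- in effect a uniform lower bound for the Green function of $Q(0,1)$ on $K\times Q(0,1/2)$, which follows from the hitting estimate Proposition~\ref{p26} together with Lemmas~\ref{l24} and~\ref{l25} (and, if one prefers a chaining argument, the Harnack inequality of~\cite{bl}). Combining the three $|B|$-dependent factors ($\xi$, the shell width, and this last constant) yields a positive lower bound $F(|B|)$ for the occupation time; since the infimum $I(t)$ of the occupation time over admissible pairs with $|B|=t$ is nondecreasing in $t$ (enlarging $B$ only increases the occupation time) and $I(t)\ge F(t)>0$, the theorem holds with $\varphi(t):=\sup_{0<s\le t}F(s)$, which is nondecreasing and $(0,1)$-valued.

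The main obstacle is that the bound has to depend on $|B|$ alone. A single density cube will not do: if $B$ is a disjoint union of very many tiny cubes of fixed total volume, every local occupation estimate for $B$ scales with the arbitrarily small side length of those cubes. This forces the summation over a whole family of density cubes, and Lemma~\ref{l42} is precisely what makes each contribution comparable to the \emph{volume} $|R|$ rather than to a higher power of the side length of $R$. The second delicate point is the degeneracy of the occupation-time estimate near $\partial Q(0,1)$; this is why the discarded boundary shell must be allowed a width comparable to $|B|$, so that $\varphi(t)$ is permitted to tend to $0$ as $t\to0$.
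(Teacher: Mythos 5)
Your decomposition of $B$ into high-density dyadic cubes, the local application of Proposition~\ref{p41} on each cube, and the propagation via Lemma~\ref{l42} are all sound and parallel to the devices the paper uses. The argument breaks, however, at the very last step, which is where all the difficulty has been pushed. After summing over the density cubes you are left needing
\[
\eex\int_0^{\tau_{Q(0,1)}}1_E(X_s)\,ds\ \ge\ c(K)\,|E|,\qquad E\subseteq K,\ x\in Q(0,1/2),
\]
and you assert this follows from Proposition~\ref{p26} and Lemmas~\ref{l24}--\ref{l25} as ``a uniform lower bound for the Green function of $Q(0,1)$ on $K\times Q(0,1/2)$.'' It does not. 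Proposition~\ref{p26} bounds a \emph{hitting probability}, $\pp^y(T_E<\tau)\ge c_1|E|$; it says nothing about how long the process stays in $E$ once it gets there. Since $E$ is a union of arbitrarily small dyadic cubes (the generation $n=n(B)$ is not under control), the process can hit $E$ and leave it essentially instantly, so the hitting estimate cannot be upgraded to an occupation-time estimate by any of the cited lemmas. In fact a bound of the form $\eex\int_0^\tau 1_E\,ds\ge c(K)|E|$ for arbitrary measurable $E\subseteq K$ is precisely the assertion that the Green density of the killed process is bounded below on $K\times Q(0,1/2)$, and that is strictly stronger than Theorem~\ref{t43} itself (it would force $\varphi(t)\ge c\,t$, while the theorem claims no rate whatever). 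You have thus reduced the theorem to a statement that is at least as hard, i.e.\ the argument is circular.

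This is exactly the obstruction the paper's Krylov--Safonov contradiction scheme is built to avoid. There, instead of asking for a linear-in-measure occupation bound for the auxiliary set, one constructs from $B$ a dilated set $\widetilde D$ with strictly larger relative measure $|\widetilde D|\ge(q+1)/2>q>q_0$, invokes the \emph{inductive hypothesis} $\varphi(q)>0$ (not a Green function bound) to lower-bound the occupation time of $\widetilde D$, and then transfers back to $B$ cube-by-cube via Lemma~\ref{l42} to contradict the assumed smallness of $\eex\int_0^{\tau}1_{B}(X_s)\,ds$. The bootstrap in measure density replaces the missing pointwise kernel estimate. Your proof would go through if a two-sided Green function estimate for these stable-like processes were part of the available toolkit, but under Assumptions~\ref{a11}--\ref{a22} alone it is not, and the paper deliberately avoids needing it.
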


\begin{proof}
Let
\begin{align*}
\varphi(\varepsilon) = \inf\Bigl\{ \eey \int_0^{\tau_{Q(z_0,R)}}&1_B(X_s)ds \colon z_0 \in \rr^d, R>0,B \subseteq Q(z_0,R), \\
 &|B| \geq \varepsilon|Q(z_0,R)|, y \in Q(z_0,R/2) \Bigr\}.
\end{align*}
By Proposition \ref{p41} and scaling, we obtain that $\varphi(\varepsilon) > 0$ for $\varepsilon$ sufficiently close to 1.  Our goal, then, is to show that $\varphi(\varepsilon) > 0$ for all positive $\varepsilon$.

Let $q_0$ be the infimum of the $\varepsilon$ for which $\varphi(\varepsilon) > 0$.  We will argue by contradiction, and will suppose that $q_0 > 0$.  Since $q_0 < 1$, there exists a $q > q_0$ such that $(q + q^2)/2 <q_0$.  Set $\gamma = (q - q^2)/2$.  Let $\beta$ be a number of the form $2^{-n}$ with
$$ (\gamma \wedge q \wedge(1-q))/32d \leq \beta < (\gamma \wedge q \wedge(1-q))/16d. $$

Since $\xi(\delta) \leq \delta$ and $\varphi$ is an increasing function, there exist $z_0 \in \rr^d$, $R > 0$, $B_1 \subseteq Q(z_0, R)$, and $x \in Q(z_0, R/2)$ such that $q > |B_1|/|Q(z_0,R)| > q - \gamma / 2$ and 
$$ \eex  \int_0^{\tau_{Q(z_0,R)}}1_{B_1}(X_s)ds < \xi(\varphi(q))\varphi(q),$$
where $\xi$ is defined in Lemma \ref{l42}.  Without loss of generality, we can suppose $z_0 = 0$ and $R = 1$, so that
$$\eex  \int_0^{\tau_{Q(0,1)}}1_{B_1}(X_s)ds < \xi(\varphi(q))\varphi(q).$$

Let $B = B_1 \cap Q(0,1-\beta)$.  Then
$$\eex  \int_0^{\tau_{Q(0,1)}}1_{B}(X_s)ds < \xi(\varphi(q))\varphi(q),$$
and by our choice of $\beta$, $q > |B| > q - \gamma$.

As in the Harnack inequality proof given by Krylov and Safonov \cite{ks}, we construct $D$ consisting of the union of cubes $\widehat R_i$, such that $$|D \cup Q(0,1)| \geq \frac{|B|}{q} > \frac{q - \gamma}{q} = \frac{q + 1}{2},$$ and such that $ |B \cup R_i| > q|R_i|$ for all $i$.  We also have that the $R_i$ have pairwise disjoint interiors, where $R_i$ is the cube with the same center as $\widehat R_i$ and one-third the side length.  (For a proof of this, see Chapter 5, Section 7 of \cite{bass}.)  Let $\widetilde D =D \cap Q(0,1)$.  Then we see that
$$|\widetilde D|  \geq (q + 1)/2 > q > q_0,$$
and therefore
$$ \eex \int_0^{\tau_{Q(0,1)}}1_{\widetilde D}(X_s)ds > \varphi(q).$$

Let $V_i = \widehat R_i \cap Q(0,1-\beta)$.  We want to show for each $i$,
\begin{equation} \label{goal}
\eex  \int_0^{\tau_{Q(0,1)}}1_{B \cap R_i}(X_s)ds \geq \xi(\varphi(q)) \eex  \int_0^{\tau_{Q(0,1)}}1_{V_i}(X_s)ds.
\end{equation}

Once we have \eqref{goal}, we sum and obtain 
\begin{align*} 
\eex  \int_0^{\tau_{Q(0,1)}}1_{B}(X_s)ds &\geq \sum_i \int_0^{\tau_{Q(0,1)}}1_{B \cap R_i}(X_s)ds \\
&\geq \xi(\varphi(q))  \sum_i \eex \int_0^{\tau_{Q(0,1)}}1_{V_i}(X_s)ds \\
&\geq \xi(\varphi(q))  \eex \int_0^{\tau_{Q(0,1)}}1_{\widetilde D}(X_s)ds \\
&\geq \xi(\varphi(q))\varphi(q),
\end{align*}
which is our desired contradiction.

We now prove \eqref{goal}.  Fix $i$.  By our definition of $\beta$, if $V_i$ is not empty, then $V_i$ is contained in a cube $W_i$ which is itself a subset of $Q(0,1-\beta)$, such that $|W_i| \leq 3^d|R_i|$.  Let $ R_i^*$ be the cube with the same center as $R_i$ but side length half as long.  By the definition of $\varphi$, 
$$ \eey \int_0^{\tau_{R_i}} 1_{B \cap R_i}(X_s)ds \geq \varphi(q)\eey \tau_{R_i} $$
if $ y \in R_i^*$.  We can now deduce \eqref{goal} from Lemma \ref{l42} and scaling. 
\qed
\end{proof}

\section{Equicontinuity and Approximation}\label{approx}

We recall that the convolution of two functions is defined by $f * g(x) = \int f(y) g(x-y)\thinspace dy$.  Let $\varphi$ be a nonnegative radially symmetric function with compact support such that $\int_{\rr^d}\varphi = 1$ and $\varphi > 0$ on $B(0,r)$ for some $r$.  Let $\varphi_\varepsilon(x) = \varepsilon^{-d}\varphi(x/\varepsilon)$.

\begin{theorem} \label{t61}
Let $\lambda > 0$.  There exist $n^\varepsilon(x,h)$ in $C^\infty(x)$ with the following properties: \\
(i) $n^\varepsilon(x,h)$ satisfies Assumption \ref{a11}. \\
(ii) If 
\begin{equation}\label{e61}
\LL^\varepsilon f(x) = \int _{\rr^d - \{0\}} [f(x+h)-f(x) - \nabla f(x) \cdot h1_{(|h| \leq 1)}] n^\varepsilon(x,h) dh,
\end{equation}
$\ppx_\varepsilon$ is the solution for the martingale problem for $\LL^\varepsilon$ started at $x$, and 
\begin{equation}\label{e62}
S_\lambda^\varepsilon h(x) = \eex_\varepsilon \int_0^\infty e^{-\lambda t} h(X_t) \thinspace dt
\end{equation}
for $h$ bounded, then

\begin{equation}
(S_\lambda^\varepsilon f * \varphi_\varepsilon)(x_0) \to \eex \int_0^\infty e^{-\lambda t} f(X_t) \thinspace dt
\end{equation}
whenever $f$ is continuous.

\end{theorem}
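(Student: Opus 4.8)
\medskip
\noindent\textbf{Proof strategy.} The plan is to mollify in the $x$ variable, taking
\[
 n^\varepsilon(x,h) = (n(\cdot,h)*\varphi_\varepsilon)(x) = \int_{\rr^d} n(y,h)\,\varphi_\varepsilon(x-y)\,dy ,
\]
and then to deduce the convergence from a modulus of continuity for the family $\{S_\lambda^\varepsilon f\}_{\varepsilon>0}$ that does not depend on $\varepsilon$; granted that, the convolution against $\varphi_\varepsilon$ in the statement costs nothing. Property (i) (that $n^\varepsilon$ satisfies Assumption \ref{a11}) and the existence of $\ppx_\varepsilon$ come for free: since $\varphi_\varepsilon\ge 0$ and $\int\varphi_\varepsilon=1$, averaging \eqref{key} over $y$ gives $n^\varepsilon$ the same two-sided bound with the same $\kappa,\alpha$; symmetry in $h$ is preserved because $\varphi_\varepsilon$ acts only on $x$; $n^\varepsilon(\cdot,h)\in C^\infty$ because $\varphi_\varepsilon$ is; and then the smoothness hypothesis of \cite{ba}, \cite{k} holds, so the martingale problem for $\LL^\varepsilon$ is well posed and $\ppx_\varepsilon$, hence $S_\lambda^\varepsilon$, are defined. (The a.e.\ convergence $n^\varepsilon\to n$ is immediate, $\varphi_\varepsilon$ being an approximate identity and $n(\cdot,h)$ locally integrable.) Moreover, on any fixed bounded set $n^\varepsilon$ still satisfies Assumption \ref{a22}, indeed Assumption \ref{ouch}, with a constant $\zeta$ independent of $\varepsilon$: from $\bar n^\varepsilon(x,h)-\bar n^\varepsilon(x_0,h)=\int(\bar n(x-u,h)-\bar n(x_0-u,h))\varphi_\varepsilon(u)\,du$ the oscillation of $\bar n^\varepsilon$ in $x$ is bounded by that of $\bar n$ over a slightly larger set, and the remark following Proposition \ref{p33} applies. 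Consequently Lemmas \ref{l24}--\ref{l25}, Propositions \ref{p26} and \ref{p33}, and Theorems \ref{support} and \ref{t43} all hold for $\LL^\varepsilon$ with constants independent of $\varepsilon$; in particular $\|S_\lambda^\varepsilon g\|_\infty\le c_1\|g\|_{p_0}$ and $\eez_\varepsilon\tau_{B(z,\rho)}\le c_2\rho^\alpha$ uniformly in $\varepsilon$.

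The heart of the matter is uniform equicontinuity: for each compact $K$, bounded continuous $f$, and $\delta>0$ there is $\rho>0$ with $|S_\lambda^\varepsilon f(x)-S_\lambda^\varepsilon f(y)|<\delta$ whenever $x,y\in K$, $|x-y|<\rho$, $\varepsilon>0$. I would fix $x,y\in B(x_0,\rho)$ and, using that $e^{-\lambda t}S_\lambda^\varepsilon f(X_t)+\int_0^t e^{-\lambda s}f(X_s)\,ds$ is a $\ppx_\varepsilon$-martingale, write, with $\sigma=\tau_{B(x_0,2\rho)}$,
\[
 S_\lambda^\varepsilon f(x)=\eex_\varepsilon\!\int_0^{\sigma}e^{-\lambda s}f(X_s)\,ds+\eex_\varepsilon\!\left[e^{-\lambda\sigma}S_\lambda^\varepsilon f(X_\sigma)\right],
\]
and likewise for $y$. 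The first term is $O(\rho^\alpha)$ uniformly in $\varepsilon$ by Lemma \ref{l25}. For the second, couple the processes started at $x$ and at $y$: the support theorem (Theorem \ref{support}), run on a slightly larger ball and with constants uniform in $\varepsilon$, brings both into a common small sub-cube with probability bounded below, after which the occupation-time lower bound (Theorem \ref{t43}) keeps them in overlapping sets of positive measure long enough to be coupled (using that the two jump kernels are close) before leaving $B(x_0,2\rho)$; on the coupling event the two second terms agree, on its complement they are controlled by $\|S_\lambda^\varepsilon f\|_\infty$. This produces an oscillation bound $\mathrm{Osc}_{B(x_0,\rho)}S_\lambda^\varepsilon f\le\theta\,\mathrm{Osc}_{B(x_0,2\rho)}S_\lambda^\varepsilon f+c\,\rho^\alpha\|f\|_\infty$ with $\theta<1$ and $c$ independent of $\varepsilon,\rho,x_0$, and the usual iteration over dyadic scales turns this into a uniform H\"older modulus. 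Fusing the support theorem and the occupation-time estimate into this uniform coupling bound is, I expect, the main obstacle; the remaining steps are routine assembly.

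Granted the equicontinuity, conclude as follows. By the uniform $L^\infty$ bound and Arzel\`a--Ascoli, every sequence $\varepsilon_n\downarrow 0$ has a subsequence along which $S_\lambda^{\varepsilon_n}f\to g$ locally uniformly, $g$ continuous. To identify $g$, freeze coefficients: for $z\in\rr^d$ let $R_\lambda^{z}$ be the resolvent of the L\'evy process with kernel $n(z,h)$, set $\calB^{z}=\LL-\LL_0^{z}$ with $\LL_0^{z}$ as in \eqref{l0} but with $z$ in place of $x_0$, and define $R_\lambda^{\varepsilon,z},\calB^{\varepsilon,z}$ the same way from $n^\varepsilon$. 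As in the proof of Proposition \ref{p33} (cf.\ \cite{bc1}, Proposition 6.1), for every $z$,
\[
 S_\lambda^\varepsilon f(z)=\sum_{i=0}^{\infty}R_\lambda^{\varepsilon,z}\bigl(\calB^{\varepsilon,z}R_\lambda^{\varepsilon,z}\bigr)^{i}f(z),\qquad \ee^{z}\!\int_0^{\infty}e^{-\lambda t}f(X_t)\,dt=\sum_{i=0}^{\infty}R_\lambda^{z}\bigl(\calB^{z}R_\lambda^{z}\bigr)^{i}f(z),
\]
the series converging by the $L^{p}$ and $L^{p}\!\to\!L^{\infty}$ bounds of that proof (uniform in $\varepsilon$ and $z$), first for $\lambda$ large and then for all $\lambda$ by the unit-interval decomposition used there. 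Since $n^\varepsilon(z,\cdot)\to n(z,\cdot)$ for a.e.\ $h$, for a.e.\ $z$, while $n^\varepsilon\le\kappa^{-1}|h|^{-d-\alpha}$, dominated convergence gives $R_\lambda^{\varepsilon,z}\to R_\lambda^{z}$ and $\calB^{\varepsilon,z}\to\calB^{z}$ in the relevant operator norms, so each term converges and the tails are uniformly small; hence $g(z)=\ee^{z}\int_0^{\infty}e^{-\lambda t}f(X_t)\,dt$ for a.e.\ $z$, and then for every $z$ by continuity. The limit being the same along every subsequence, $S_\lambda^\varepsilon f\to\ee^{\cdot}\int_0^{\infty}e^{-\lambda t}f(X_t)\,dt$ locally uniformly, whence
\[
 \bigl(S_\lambda^\varepsilon f*\varphi_\varepsilon\bigr)(x_0)-\ee^{x_0}\!\int_0^{\infty}e^{-\lambda t}f(X_t)\,dt=\int\bigl(S_\lambda^\varepsilon f(x_0-\varepsilon z)-S_\lambda^\varepsilon f(x_0)\bigr)\varphi(z)\,dz+\Bigl(S_\lambda^\varepsilon f(x_0)-\ee^{x_0}\!\int_0^{\infty}e^{-\lambda t}f(X_t)\,dt\Bigr)\longrightarrow 0
\]
by equicontinuity and the compact support of $\varphi$ for the first term and by the locally uniform convergence for the second, which is exactly (ii).
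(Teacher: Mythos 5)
Your proposal diverges from the paper's proof at the very first step, and the alternative route you substitute has real gaps.

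\textbf{What the paper actually does.} The paper does not take $n^\varepsilon(\cdot,h)=n(\cdot,h)*\varphi_\varepsilon$. It defines the resolvent measure $\mu(C)=\ee\int_0^\infty e^{-\lambda t}1_C(X_t)\,dt$ and sets
\[
n^\varepsilon(x,h)=\frac{\int\varphi_\varepsilon(x-y)\,n(y,h)\,\mu(dy)}{\int\varphi_\varepsilon(x-y)\,\mu(dy)},
\]
a $\mu$-weighted (not Lebesgue-weighted) average of $n(\cdot,h)$. This choice is the crux, not a cosmetic variant: it is manufactured precisely so that $\int\varphi_\varepsilon(x-z)\,n(z,h)\,\mu(dz)=n^\varepsilon(x,h)\int\varphi_\varepsilon(x-y)\,\mu(dy)$, which is what lets $\LL$ acting on a mollified test function, integrated against $\mu$, be rewritten in terms of $\LL^\varepsilon$. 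With that in hand the proof is a soft computation: It\^{o}'s product formula gives the weak identity $u(x_0)=\int(\lambda u-\LL u)\,d\mu$ for bounded $C^2$ $u$; applying it to $u=v*\varphi_\varepsilon$ with $v=S_\lambda^\varepsilon f$, using the above commutation and $(\lambda-\LL^\varepsilon)v=f$, one lands exactly on $(S_\lambda^\varepsilon f*\varphi_\varepsilon)(x_0)=\int (f*\varphi_\varepsilon)\,d\mu$, and the right side converges to $\int f\,d\mu$ as $\varepsilon\to0$ because $f*\varphi_\varepsilon\to f$ uniformly. No compactness, no equicontinuity, and no identification of subsequential limits are needed for Theorem \ref{t61}; the convolution $*\varphi_\varepsilon$ in the statement is not a nuisance you can afford to ignore at the end --- it is the natural output of the computation, and the paper removes it only in Theorem \ref{t82}, via equicontinuity.

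\textbf{Where your route has gaps.} Your version replaces this computation with compactness plus identification of the limit, and both substitute steps are under-justified.
(a) You claim uniform-in-$\varepsilon$ H\"older regularity of $S_\lambda^\varepsilon f$ via a coupling built from the support theorem and the occupation-time lower bound, but you explicitly leave the construction as ``the main obstacle.'' The paper gets this fact by citing \cite{bl}, Theorem 4.3, and only where it actually needs it (Theorem \ref{t82}); as written your argument asserts, rather than proves, the oscillation inequality and its iteration.
(b) More seriously, the limit-identification step is not a consequence of dominated convergence as claimed. You expand $S_\lambda^\varepsilon f(z)=\sum_i R_\lambda^{\varepsilon,z}(\calB^{\varepsilon,z}R_\lambda^{\varepsilon,z})^i f(z)$ and assert that a.e.\ convergence $n^\varepsilon(z,\cdot)\to n(z,\cdot)$ under the fixed dominating bound yields $R_\lambda^{\varepsilon,z}\to R_\lambda^z$ and $\calB^{\varepsilon,z}\to\calB^z$ ``in the relevant operator norms.'' Dominated convergence gives pointwise convergence of the L\'evy symbols, nothing more. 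Passing from that to convergence of the frozen resolvents as operators $L^{p}\to L^{\infty}$, and of the perturbations $\calB^{\varepsilon,z}R_\lambda^{\varepsilon,z}\to\calB^{z}R_\lambda^{z}$ as operators $L^{p}\to L^{p}$ (uniformly enough in $z$ to sum the series), requires convergence of the heat kernels of a varying family of stable-like L\'evy processes in $L^q$, together with uniform control of the perturbation estimates from \cite{bt}; this is a nontrivial Fourier-analytic argument that your proposal neither supplies nor references. The paper's $\mu$-weighted choice of $n^\varepsilon$ is exactly what lets one sidestep all of this.

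In short: your $n^\varepsilon$ is not the paper's; and while the compactness-plus-identification strategy could in principle prove the stronger Theorem \ref{t82} directly, the two hinges you lean on --- the uniform modulus of continuity and the operator-norm convergence of frozen resolvents --- are stated but not established, and the second in particular is far from a dominated-convergence fact.
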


\begin{proof}
Define a measure $\mu$ by 
\begin{equation}\label{mu}
\mu(C) = \ee \int_0^\infty  e^{-\lambda t} 1_C(X_{t}) \thinspace dt.
\end{equation}
We claim that for each $y \in \rr^d$ and $r > 0$, there is some positive probability that $X_t$ starting at $x_0$ enters the ball $B(y,r)$ and stays there a positive length of time.  To see this, let $\psi \colon [0,1] \to \rr^d$ be continuous with $\psi(0) = x_0$ and $\psi(1) = y$, such that $|y - \psi(t)| < r/2$ for all $t \in [1/2,1]$, and then apply the support theorem with $\varepsilon = r/2$.  This result implies that $\mu(B(y,r)) > 0$ for all $y$ and $r$.  We define
\begin{equation}\label{e65}
\displaystyle n^\varepsilon(x,h) = \frac{\int \varphi_\varepsilon(x - y) n(y,h) \mu(dy)}{\int \varphi_\varepsilon(x - y) \mu(dy)} .
\end{equation}
It follows from our assumptions on  $\varphi$ that the denominator is nonzero.  It is clear that (i) holds.

Suppose $u$ is a bounded $C^2$ function.  By Ito's product formula, 
\begin{align*}
e^{-\lambda t}u(X_t) &=  u(X_0) - \int_0^t u(X_{s-})\lambda e^{-\lambda s}\thinspace ds + \int_0^t e^
{-\lambda s} d[u(X_s)]_s \\
&= u(X_0) -  \int_0^t u(X_{s-})\lambda e^{-\lambda s}\thinspace ds + \text{martingale} + \int_0^t e^
{-\lambda s} \LL u(X_{s-}) \thinspace ds.
\end{align*}

Suppose $X_0 = x_0$.  We take expectations and let $t \to \infty$, to obtain
\begin{equation}\label{e66}
u(x_0) = \ee \int_0^\infty e^{-\lambda s} (\lambda u - \LL u)(X_{s-})\thinspace ds = \int (\lambda u - \LL u)(x) \thinspace \mu(dx).
\end{equation}

We now let $v$ be a bounded, $C^2$ function, and we apply \eqref{e66} to $u = v * \varphi_\varepsilon$.  On the left-hand side we have $\int v(x_0 -y)\varphi_\varepsilon(y) \thinspace dy$. We now observe that
\begin{align}\label{e67}
\LL(u)(z) &= \int [u(z+h)-u(z) - \nabla u(z) \cdot h1_{(|h| \leq 1)}] n(z,h) dh  \\
&= \int [u(z+h)-u(z)]n(z,h)dh - \int_{|h| \leq 1}[ \nabla (v * \varphi_\varepsilon)(z) \cdot h ]n(z,h) dh \notag \\
&= \int [u(z+h)-u(z)]n(z,h)dh - \int_{|h| \leq 1}\sum_{i=1}^d ( \partial_iv * \varphi_\varepsilon)(z)p_i(h)n(z,h)dh \notag \\
&= \int [u(z+h)-u(z)]n(z,h)dh - \int_{|h| \leq 1}\sum_{i=1}^d \int \partial_iv(x)\varphi_\varepsilon(x-z)p_i(h)n(x,z)dz\thinspace dh.\notag 
\end{align}
where $p_i$ is the projection onto the $i$th coordinate.

Furthermore, by definition, 
\begin{align*}
&\int [u(z+h)-u(z)]n(z,h)dh \\ &= \int [(v * \varphi_\varepsilon)(z+h)-(v * \varphi_\varepsilon)(z)]n(z,h)dh \\
&= \int \int v(x)\varphi_\varepsilon(x-(z+h))n(z,h)dy\thinspace dh - \int \int v(x)\varphi_\varepsilon(x-z) n(z,h)dy\thinspace dh.
\end{align*}

However, by \eqref{e65}, 
\begin{equation} \label{e68}
\int \varphi_\varepsilon(x-z)n(z,h)\mu(dy) = n^\varepsilon(x,h)\int \varphi_\varepsilon(x-y) \mu(dy).
\end{equation}

Thus, combining \eqref{e66}, \eqref{e67}. and \eqref{e68}, 
\begin{align}
\int v(x_0 - y)\varphi_{\varepsilon}(y)dy
 &= \int [ \lambda(v * \varphi_\varepsilon) - \LL(v * \varphi_\varepsilon)](x) \mu(dx) \label{e69} \\
&= \int \int (\lambda - \LL^\varepsilon)v(x)\varphi_\varepsilon(x-y) \mu(dy) dx. \notag 
\end{align}

Suppose that $f$ is smooth, and let $v(x) = S_\lambda^\varepsilon f(x)$.  It follows from results of \cite{bass2}
that $v$ is $C^2$, and from Proposition \ref{p33}, we have that $v$ is bounded.  We further claim that $(\lambda - \LL^\varepsilon)v  = f$.  To see this, let $P^\varepsilon_t$ be the  transition semigroup associated to $X$. 
 We observe that we can write

$$ S_\lambda^\varepsilon f(x) = \int_0^\infty e^{-\lambda t}P^\varepsilon_t f(x) \thinspace dt,$$
so that 
\begin{align*}
\LL^\varepsilon(G ^\lambda_\varepsilon f(x)) &= \displaystyle \lim_{h\to 0} \frac{P^\varepsilon_hG ^\lambda_\varepsilon f(x) - G ^\lambda_\varepsilon f(x)}{h} \\
 &= \lim_{h\to 0} \frac{ \int_0^\infty e^{-\lambda t} P^\varepsilon_{t+h}\thinspace dt - \int_0^\infty e^{-\lambda t}P^\varepsilon_t f(x) \thinspace dt}{h}\\
&=  \lambda - f(x). 
\end{align*}

Therefore, we can substitute in \eqref{e69}, to obtain
\begin{align}
\int S_\lambda^\varepsilon f(x_0 - y) \varphi_\varepsilon (y)\thinspace dy &= \int \int f(x) \varphi_\varepsilon(x-y) \mu(dy) \thinspace dx \label{e610} \\
&= \int f * \varphi_\varepsilon(y) \mu(dy). \notag
\end{align}

By a limit argument, we have \eqref{e610} when $f$ is continuous.  Since $f$ is continuous, $f * \varphi_\varepsilon$ is bounded and converges uniformly to $f$.  Therefore, 
$$ \int f * \varphi_\varepsilon (y) \mu(dy) \to \int f(y) \mu(dy) = \ee \int_0^\infty e^{-\lambda t} f(X_t) dt.$$
\qed
\end{proof}

We now use Proposition \ref{p33} to extend this result further.

\begin{theorem} \label{c63}
Under the assumptions of Theorem \ref{t61}, 
$$(S_\lambda^\varepsilon f * \varphi_\varepsilon)(x_0) \to \ee \int_0^\infty e^{-\lambda t} f(X_t) \thinspace dt, $$
if $f$ is bounded.
\end{theorem}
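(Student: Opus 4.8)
The plan is to bootstrap from Theorem~\ref{t61}, which already covers continuous $f$, to an arbitrary bounded $f$, using the $L^{p_0}$ estimate of Proposition~\ref{p33} as the stability mechanism that absorbs the difference between $f$ and a continuous approximant. First I would fix a bounded $f$ with $\|f\|_\infty = M$, take $p_0$ as in Proposition~\ref{p33}, and for a given $\delta>0$ apply Lusin's theorem together with the Tietze extension theorem to produce a continuous $g\colon\rr^d\to\rr$ with $\|g\|_\infty\le M$ and $|\{x: f(x)\ne g(x)\}|<\delta$. Then $f-g$ is bounded by $2M$ and supported in a set of Lebesgue measure less than $\delta$, so $\|f-g\|_{p_0}\le 2M\delta^{1/p_0}$; note this stays finite even though $f$ itself need not belong to any $L^p(\rr^d)$.

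Next I would write
\begin{align*}
\bigl(S_\lambda^\varepsilon f*\varphi_\varepsilon\bigr)(x_0) - \ee\int_0^\infty e^{-\lambda t}f(X_t)\,dt
&= \bigl(S_\lambda^\varepsilon (f-g)*\varphi_\varepsilon\bigr)(x_0) + \ee\int_0^\infty e^{-\lambda t}(g-f)(X_t)\,dt \\
&\quad + \Bigl[\bigl(S_\lambda^\varepsilon g*\varphi_\varepsilon\bigr)(x_0) - \ee\int_0^\infty e^{-\lambda t}g(X_t)\,dt\Bigr].
\end{align*}
The bracketed term tends to $0$ as $\varepsilon\to0$ by Theorem~\ref{t61}, since $g$ is continuous. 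For the two remaining terms I would use that the bound in Proposition~\ref{p33} is uniform over starting points, namely $|\eez\int_0^\infty e^{-\lambda t}h(X_t)\,dt|\le c_1\|h\|_{p_0}$ for every $z$; because $\varphi_\varepsilon$ is a probability density, integrating this estimate against $\varphi_\varepsilon$ shows that $|(S_\lambda^\varepsilon(f-g)*\varphi_\varepsilon)(x_0)|$ and $|\ee\int_0^\infty e^{-\lambda t}(g-f)(X_t)\,dt|$ are each at most $c_1\|f-g\|_{p_0}\le 2c_1M\delta^{1/p_0}$. Taking $\limsup_{\varepsilon\to0}$ and then letting $\delta\downarrow0$ yields the conclusion.

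The main obstacle, and the point needing the most care, is justifying that Proposition~\ref{p33} applies to the approximating processes $X^\varepsilon$ with a constant $c_1$ independent of $\varepsilon$ — this is precisely what legitimizes the uniform error bound on $(S_\lambda^\varepsilon(f-g)*\varphi_\varepsilon)(x_0)$. I would argue that $n^\varepsilon$ satisfies Assumption~\ref{a11} with the same $\kappa,\alpha$ (as observed in Theorem~\ref{t61}(i)), and that, for $\varepsilon$ small, $n^\varepsilon(x,\cdot)$ is a weighted average of the kernels $n(z,\cdot)$ over $z$ in a small neighbourhood of $x$, so it inherits from $n$ the continuity estimate of Assumption~\ref{ouch} (equivalently Assumption~\ref{a22}) with a comparable constant $\zeta$; the proof of Proposition~\ref{p33}, whose constants depend only on $\alpha$, $\kappa$, $d$ and $\zeta$, then runs unchanged for each $X^\varepsilon$. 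The remaining ingredient, that the middle term is genuinely covered by Theorem~\ref{t61} since $g$ is bounded and continuous, is immediate.
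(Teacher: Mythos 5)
Your argument is correct in outline but takes a genuinely different route from the paper's. You run a classical three-term approximation: replace $f$ by a continuous $g$ via Lusin/Tietze so that $\|f-g\|_{p_0}$ is small, absorb the two error terms through the $L^{p_0}$ estimate of Proposition~\ref{p33} applied simultaneously to $X$ and to the family $X^\varepsilon$, and handle the middle term by Theorem~\ref{t61}. The paper instead observes that the identity \eqref{e610}, $(S_\lambda^\varepsilon f * \varphi_\varepsilon)(x_0) = \int f * \varphi_\varepsilon(y)\,\mu(dy)$, is exact and extends to bounded $f$ by a limit argument; once this is in hand, the whole $\varepsilon$-dependence of the left side sits inside a convolution against the fixed measure $\mu$, and the conclusion is just dominated convergence together with $\mu \ll$ Lebesgue (which follows from Proposition~\ref{p33} applied only to the original process). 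The price of your route is exactly the point you flag: you must certify that Proposition~\ref{p33} holds for the $X^\varepsilon$ with a constant independent of $\varepsilon$. Your averaging argument shows $n^\varepsilon$ inherits Assumptions~\ref{a11} and~\ref{ouch} with comparable constants, which is the main point, but one should also note that the constant in Lemma~\ref{l32} depends on $\|p(1,\cdot)\|_q$ for the comparison L\'evy process built from $n^\varepsilon(x_0,\cdot)$, and uniform control of that requires an extra (routine, but real) step; the paper's route sidesteps all uniform-in-$\varepsilon$ resolvent estimates. A small additional repair in your write-up: Lusin's theorem is stated on sets of finite measure, so to get a globally continuous $g$ with $|\{f\neq g\}| < \delta$ one should first truncate $f$ to a large ball and control the tail contribution separately (using that the $\mu$-mass and the analogous $\mu^\varepsilon$-masses of $B(0,R)^c$ vanish as $R\to\infty$). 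These are fixable technicalities rather than gaps, but they make the three-term decomposition noticeably heavier than the paper's identity-plus-dominated-convergence argument.
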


\begin{proof}
In the proof of Theorem \ref{t61}, we have that \eqref{e610} holds when $f$ is continuous, and by a limit argument, we have that \eqref{e610} holds for $f$ bounded.  Therefore, it suffices to show that the right-hand side of \eqref{e610} converges to $\int f(y) \mu(dy)$.  It is known that since $f$ is bounded, $f * \varphi_\varepsilon$ converges to $f$ almost everywhere and boundedly.  By Proposition \ref{p33} and \eqref{mu}, $\mu$ is absolutely continuous with respect to Lebesgue measure.  Then by dominated convergence, 
\begin{align*}
\int f * \varphi_\varepsilon(y) \mu(dy) &=  \int f * \varphi_\varepsilon(y) (d\mu/dy) dy \\
&\to \int f(y) (d\mu/dy) dy = \int f(y) \mu(dy).
\end{align*}
\qed
\end{proof}

\begin{theorem}\label{t82}
Let $\lambda > 0$, and let $\pp$ be a solution to the martingale problem for $\LL$ started at $x$.  There exist $n^\varepsilon(x,h)$ which are smooth with respect to $x$, such that if $\LL^\varepsilon$ is defined by \eqref{e61}, and $S_\lambda^\varepsilon$ is defined by \eqref{e62}, then
$$ S_\lambda^\varepsilon f(x) \to \ee \int_0^\infty e^{-\lambda t} f(X_t) dt $$
when $f$ is bounded.
\end{theorem}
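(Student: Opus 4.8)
The plan is to upgrade the convergence of the convolutions $(S_\lambda^\varepsilon f*\varphi_\varepsilon)(x)$ obtained in Theorem~\ref{c63} to convergence of $S_\lambda^\varepsilon f(x)$ itself; the gap between the two is controlled once one knows that the family $\{S_\lambda^\varepsilon f\}_{0<\varepsilon<1}$ is equicontinuous near $x$ with a modulus of continuity independent of $\varepsilon$. Throughout we apply the construction of Theorem~\ref{t61} with the point $x_0$ there taken to be the starting point $x$, so that $n^\varepsilon$ is given by \eqref{e65} with $\mu$ as in \eqref{mu} for the process started at $x$.

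First I would record the a priori bounds. From \eqref{e62} one has $\|S_\lambda^\varepsilon f\|_\infty\le\lambda^{-1}\|f\|_\infty$ for every $\varepsilon$. Since, for each fixed $h$, the function $z\mapsto n^\varepsilon(z,h)$ is a weighted average of the kernels $y\mapsto n(y,h)$ (see \eqref{e65}), it inherits the two-sided bound of Assumption~\ref{a11} with the same $\kappa$ and $\alpha$ --- this is part~(i) of Theorem~\ref{t61} --- and also the modulus in Assumption~\ref{ouch}, all with constants not depending on $\varepsilon$. Consequently every estimate of Sections~\ref{prelim}--\ref{Occupation} applies to the process $X^\varepsilon$ associated with $\LL^\varepsilon$ with constants independent of $\varepsilon$; in particular the hitting estimate of Proposition~\ref{p26} and the occupation-time bound of Theorem~\ref{t43} hold uniformly in $\varepsilon$. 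I would then run the standard oscillation argument (the jump-process analogue of the Krylov--Safonov estimate) for the resolvent $u=S_\lambda^\varepsilon f$: after rescaling to a unit ball and replacing $u$ by a suitable affine function of itself so that $0\le u\le1$ on $B(z,1)$ and $|\{u\ge\tfrac12\}\cap B(z,1/2)|\ge\tfrac12|B(z,1/2)|$, the fact that $X^\varepsilon$ started anywhere in $B(z,1/2)$ reaches $\{u\ge\tfrac12\}$ before leaving $B(z,3)$ with probability bounded below (which follows from Proposition~\ref{p26} together with Theorem~\ref{t43}) forces a geometric decay of $\mathrm{osc}_{B(z,r)}u$ relative to $\mathrm{osc}_{B(z,1)}u$, up to an additive term controlled by $\|f\|_\infty$, with $r$, the decay ratio and all multiplicative constants independent of $\varepsilon$. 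Iterating produces a H\"older modulus $\omega$, independent of $\varepsilon$, for $S_\lambda^\varepsilon f$ on $\overline{B(x,1)}$.

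Now fix a sequence $\varepsilon_k\downarrow0$. By the uniform bound and the uniform modulus $\omega$, the Arzela--Ascoli theorem provides a subsequence, not relabelled, along which $S_\lambda^{\varepsilon_k}f$ converges uniformly on $\overline{B(x,1)}$ to a function $g$ continuous on $\overline{B(x,1)}$. Since $\varphi$ has compact support, $\varphi_{\varepsilon_k}$ is supported in $B(0,c\varepsilon_k)$, so for $k$ large
\[
\bigl|(S_\lambda^{\varepsilon_k}f*\varphi_{\varepsilon_k})(x)-g(x)\bigr|\le\sup_{|y|\le c\varepsilon_k}\bigl|S_\lambda^{\varepsilon_k}f(x-y)-g(x-y)\bigr|+\bigl|(g*\varphi_{\varepsilon_k})(x)-g(x)\bigr|\longrightarrow0,
\]
the first term by the uniform convergence on $\overline{B(x,1)}$ and the second because $g$ is continuous at $x$ and $\int\varphi_{\varepsilon_k}=1$. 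Comparing with Theorem~\ref{c63}, which gives $(S_\lambda^{\varepsilon_k}f*\varphi_{\varepsilon_k})(x)\to\ee\int_0^\infty e^{-\lambda t}f(X_t)\,dt$, we conclude $g(x)=\ee\int_0^\infty e^{-\lambda t}f(X_t)\,dt$, hence $S_\lambda^{\varepsilon_k}f(x)\to\ee\int_0^\infty e^{-\lambda t}f(X_t)\,dt$. Applying the same argument to an arbitrary subsequence of $(\varepsilon_k)$ yields a further subsequence with the same limit, so in fact $S_\lambda^\varepsilon f(x)\to\ee\int_0^\infty e^{-\lambda t}f(X_t)\,dt$ as $\varepsilon\to0$, which is the assertion.

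The main obstacle is the uniform equicontinuity estimate in the second paragraph: one must check that the oscillation argument genuinely runs with constants depending only on $\alpha$, $\kappa$, $d$, $\lambda$ and $\|f\|_\infty$, which is precisely where the $\varepsilon$-independence built into Theorem~\ref{t61}(i), Proposition~\ref{p26} and Theorem~\ref{t43} is needed. The remaining steps --- Arzela--Ascoli, the approximate-identity comparison, and passing from subsequences to the full limit --- are routine.
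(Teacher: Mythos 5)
Your argument is correct and gets to the right conclusion, but it is considerably more elaborate than the paper's, which is essentially a four-line deduction. The paper simply cites Theorem~4.3 of \cite{bl} to assert that $S_\lambda^\varepsilon f$ is equicontinuous uniformly in $\varepsilon$ (the constants there depend only on $\kappa$, $\alpha$, $d$ and hence transfer to every $n^\varepsilon$ because of Theorem~\ref{t61}(i)), and then writes
\[
\bigl|(S_\lambda^\varepsilon f * \varphi_\varepsilon)(x_0) - S_\lambda^\varepsilon f(x_0)\bigr|
\le \int \bigl|S_\lambda^\varepsilon f(x_0-\varepsilon y)-S_\lambda^\varepsilon f(x_0)\bigr|\varphi(y)\,dy \longrightarrow 0,
\]
using only that $\varphi$ has compact support and the equicontinuity modulus is $\varepsilon$-free; Theorem~\ref{c63} then finishes. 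Two remarks on where your route differs. First, you re-derive the regularity estimate from scratch via a Krylov--Safonov oscillation argument rather than invoking the published result; this is a legitimate route, but you bring in Theorem~\ref{t43} (the occupation-time lower bound) as an ingredient, when in fact the Hölder estimate for resolvents of stable-like processes needs only the hitting estimate Proposition~\ref{p26} --- and unlike Theorem~\ref{t43}, that estimate does not require the continuity Assumption~\ref{a22}, so you are using strictly more than is needed. Second, once you have an $\varepsilon$-uniform modulus of continuity, the Arzel\`a--Ascoli plus ``every subsequence has a further subsequence with the same limit'' detour is unnecessary: the displayed inequality above already upgrades Theorem~\ref{c63} to the pointwise statement directly. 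None of this is a gap --- your proof is sound --- but the paper's version is shorter for exactly these reasons.
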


\begin{proof}
By \cite{bl} Theorem 4.3 , $S_\lambda^\varepsilon f$ is equicontinuous in $\varepsilon$.  Since in Theorem \ref{c63}, $\varphi_\varepsilon$ has compact support, 
\begin{align*}
&|(S_\lambda^\varepsilon f * \varphi_\varepsilon)(x_0) - S_\lambda^\varepsilon f(x_0)| \\
& \leq \int |S_\lambda^\varepsilon f(x_0 - \varepsilon y) - S_\lambda^\varepsilon f(x_0)|\varphi(y)dy \to 0
\end{align*} 
as $\varepsilon \to 0$.  The result now follows from Theorem \ref{c63}.
\qed
\end{proof}

Our lower bound on the occupation times of sets can now be used to show that the $n^\varepsilon(x,h)$ defined above converge almost everywhere to $n(x,h)$.  

\begin{proposition}\label{c86}
If $|B| > 0$, then $\eex \int_0^\infty e^{-\lambda t} 1_B(X_t)dt > 0.$
\end{proposition}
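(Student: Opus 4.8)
The plan is to localize to a small cube on which $B$ occupies a fixed fraction of the volume, to establish a uniform lower bound on the discounted occupation time of that piece for all starting points in the concentric half-cube, and then to steer the process from the given starting point into that half-cube by means of the support theorem.

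First I would fix a point $x_1$ of Lebesgue density $1$ of $B$ (such a point exists since $|B|>0$), so that $|B\cap Q(x_1,R)|\geq\tfrac12|Q(x_1,R)|$ for all sufficiently small $R>0$; then choose a measurable $B'\subseteq B\cap Q(x_1,R)$ with $|B'|=\tfrac12|Q(x_1,R)|$, so that $B'\subseteq B$ and the volume fraction of $B'$ in $Q(x_1,R)$ equals $\tfrac12$. By Theorem~\ref{t43} and scaling, with $\varphi$ the function furnished by that theorem,
\[
  \eez\int_0^{\tau_{Q(x_1,R)}}1_{B'}(X_s)\,ds\;\geq\;R^\alpha\varphi\big(\tfrac12\big),\qquad z\in Q(x_1,R/2).
\]
Writing $\tau=\tau_{Q(x_1,R)}$, using $1-e^{-\lambda s}\leq\lambda s$ and the bound $\sup_z\eez\tau^2\leq c\,R^{2\alpha}$ (which follows from Lemma~\ref{l25} and scaling, exactly as in the proof of Proposition~\ref{p41}), I obtain for $z\in Q(x_1,R/2)$
\[
  \eez\int_0^\infty e^{-\lambda s}1_{B'}(X_s)\,ds\;\geq\;\eez\int_0^{\tau}1_{B'}(X_s)\,ds-\tfrac{\lambda}{2}\eez\tau^2\;\geq\;R^\alpha\varphi\big(\tfrac12\big)-c\lambda R^{2\alpha}.
\]
Shrinking $R$ once more so that $c\lambda R^{2\alpha}\leq\tfrac12 R^\alpha\varphi(\tfrac12)$ gives a constant $\delta=\tfrac12 R^\alpha\varphi(\tfrac12)>0$ with $\eez\int_0^\infty e^{-\lambda s}1_{B'}(X_s)\,ds\geq\delta$ for every $z\in Q(x_1,R/2)$.

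Finally I would bring the process from the given starting point $x$ into $Q(x_1,R/2)$: taking a continuous path $\psi\colon[0,t_0]\to\rr^d$ with $\psi(0)=x$ and $\psi(t_0)=x_1$, the support theorem (Theorem~\ref{support}), applied with any positive number below $R/4$ in place of $\varepsilon$, produces $c_1>0$ with $\pp^x(\sup_{s\leq t_0}|X_s-\psi(s)|<R/4)>c_1$, and on that event $X_{t_0}\in Q(x_1,R/2)$. Hence, using $1_B\geq 1_{B'}$, restricting the integral to $[t_0,\infty)$, and applying the Markov property at time $t_0$,
\[
  \eex\int_0^\infty e^{-\lambda t}1_B(X_t)\,dt\;\geq\;e^{-\lambda t_0}\,\eex\Big[1_{\{X_{t_0}\in Q(x_1,R/2)\}}\;\ee^{X_{t_0}}\!\!\int_0^\infty e^{-\lambda s}1_{B'}(X_s)\,ds\Big]\;\geq\;e^{-\lambda t_0}\,\delta\,\pp^x\big(X_{t_0}\in Q(x_1,R/2)\big)\;>\;0,
\]
which is the assertion. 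The only delicate point is the discount factor: Theorem~\ref{t43} controls the \emph{undiscounted} occupation time, so one cannot simply insert $e^{-\lambda s}$, but because the density theorem lets us work at an arbitrarily small scale $R$, the discount error accrued before exiting $Q(x_1,R)$ is of order $\lambda R^{2\alpha}$, which is negligible against the occupation time, of order $R^\alpha$.
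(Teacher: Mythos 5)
Your proof is correct and follows essentially the same strategy as the paper's: localize $B$ inside a cube, apply Theorem~\ref{t43} to bound the undiscounted occupation time, control the discount factor, and use the support theorem together with the Markov property to reach that cube from the given starting point. The one genuine difference is how the discount $e^{-\lambda t}$ is tamed. The paper keeps $Q$ a \emph{unit} cube and truncates time at a large $t_0$, using the second-moment estimate from \cite{bl}, Lemma~3.3, to show the truncated tail $\eey(\tau_Q-\tau_Q\wedge t_0)$ is smaller than $\tfrac12\varphi(|Q\cap B|)$, accepting a factor $e^{-\lambda t_0}$ in return. You instead \emph{shrink the spatial scale} $R$, so that the discount error $\tfrac{\lambda}{2}\eez\tau^2=O(\lambda R^{2\alpha})$ is dominated by the occupation-time lower bound $R^{\alpha}\varphi(1/2)$. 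Your device is slightly slicker on the discount step, but it pays for it by invoking the Lebesgue density theorem, which is what keeps the volume fraction of $B$ inside $Q(x_1,R)$ bounded away from zero as $R\to0$; the paper sidesteps this by picking a single fixed unit cube with $|Q\cap B|>0$. Both arguments are sound and rely on the same two pillars: the support theorem and Theorem~\ref{t43}.
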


\begin{proof}
Let $Q$ be a unit cube such that $|Q \cap B| > 0$.  Let $Q^*$ be the cube having the same center as $Q$ does, but with side length half as long.  By the strong Markov property and the support theorem, there exists $c_1$ such that
$$\displaystyle \eex \int_0^\infty e^{-\lambda t} 1_B(X_t)dt \geq \inf_{y \in Q^*} \eey \int_0^\infty e^{-\lambda t} 1_{Q \cap B}(X_t)dt.$$
By \cite{bl}, Lemma 3.3, there exists $c_2$ such that if $y \in Q^*$, 
\begin{align*}
\eey(\tau_Q - (\tau_Q \wedge t)) &= \eey (\ee^{X_t}\tau_Q; t < \tau_Q) \\
&\leq c_2 \pp^y(t < \tau_Q) \leq c_2 \eex \tau_Q^2/t^2.
\end{align*}
After again applying \cite{bl}, Lemma 3.3, we can take $t_0$ large enough so that 
$$\sup_{y \in Q^*} \eey(\tau_Q - (\tau_Q \wedge t)) \leq \psi(|Q \cap B|) / 2, $$
where $\psi$ is the function obtained in Theorem \ref{t43}.  Therefore, we have
\begin{align*}
\eey \int_0^\infty e^{-\lambda t} 1_{Q \cap B}(X_t)dt &\geq \eey \int_0^{\tau_Q \wedge t_0} e^{-\lambda t} 1_{Q \cap B}(X_t)dt\\
&\geq e^{-\lambda t_0} \eey \int_0^{\tau_Q \wedge t_0}  1_{Q \cap B}(X_t)dt\\
&\geq e^{-\lambda t_0}\left[ \eey \int_0^{\tau_Q}1_{Q \cap B}(X_t)dt - \eey (\tau_Q - (\tau_Q \wedge t_0))\right]\\
&\geq e^{-\lambda t_0}(\psi(|Q \cap B|) - \psi(|Q \cap B|)/2) > 0.
\end{align*}
by applying Theorem \ref{t43}.
\qed
\end{proof}

\begin{proposition}\label{p87}
If $|C| > 0$, then $\ee \int_0^\infty e^{-\lambda t}1_C(X_t)dt > 0$.
\end{proposition}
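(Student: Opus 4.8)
The plan is to deduce this immediately from Proposition~\ref{c86} by conditioning on the initial position of the process. Set
$$ g(y) \;=\; \eey \int_0^\infty e^{-\lambda t} 1_C(X_t)\,dt, \qquad y \in \rr^d, $$
a nonnegative Borel function of $y$. Proposition~\ref{c86} asserts precisely that $g(y) > 0$ for \emph{every} $y \in \rr^d$, since $|C| > 0$. Because $\ee$ is the expectation under a solution of the martingale problem for $\LL$, conditioning on $X_0$ gives $\ee \int_0^\infty e^{-\lambda t} 1_C(X_t)\,dt = \ee[\,g(X_0)\,]$ (and this is just $g(x)$ if the process starts from a fixed point $x$). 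As $g$ is strictly positive at every point, $g(X_0) > 0$ almost surely, and hence $\ee[\,g(X_0)\,] > 0$, which is the claim.

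If one prefers a version of the argument that invokes Proposition~\ref{c86} only in combination with the Markov property, apply the latter at a small time $s > 0$:
$$ \ee \int_0^\infty e^{-\lambda t} 1_C(X_t)\,dt \;\geq\; \ee\!\left[\, e^{-\lambda s}\, \ee^{X_s}\!\int_0^\infty e^{-\lambda t} 1_C(X_t)\,dt \,\right] \;=\; e^{-\lambda s}\,\ee[\,g(X_s)\,] \;>\; 0, $$
once more using the pointwise positivity of $g$. Either way, the only technical points to check are the measurability of $y \mapsto g(y)$ (standard for Markov processes) and the elementary fact that the expectation of an everywhere-positive random variable is positive.

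There is no genuine obstacle in this step: all of the substance resides in Proposition~\ref{c86}, which itself rests on the support theorem (Theorem~\ref{support}), guaranteeing that the process visits every ball, and on the occupation-time lower bound (Theorem~\ref{t43}), forcing it to spend positive expected time there. The role of the present proposition is organizational: together with Proposition~\ref{p33}, which shows that the measure $\mu$ of \eqref{mu} is absolutely continuous with respect to Lebesgue measure, it shows that $\mu$ and Lebesgue measure are in fact \emph{equivalent} — and this equivalence is exactly what is needed in order to pass from an almost-everywhere-$\mu$ convergence statement for $n^\varepsilon(\cdot,h)$ to an almost-everywhere-Lebesgue one.
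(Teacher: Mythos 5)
The step that needs justification is the identity
\[
\ee \int_0^\infty e^{-\lambda t}1_C(X_t)\,dt \;=\; \ee[\,g(X_0)\,],
\]
with $g(y) = \eey \int_0^\infty e^{-\lambda t}1_C(X_t)\,dt$, and likewise the Markov-property-at-time-$s$ variant. Both presume that the law of the process under $\pp$, conditioned on the current position, agrees with the fixed strong Markov family $(\pp^y)$ of Section~\ref{prelim}. But $\ee$ in Proposition~\ref{p87}, as in Theorem~\ref{t82} immediately before it, is the expectation under an \emph{arbitrary} solution $\pp$ of the martingale problem for $\LL$ started at $x$; uniqueness of that martingale problem is nowhere assumed in the paper, so $\pp$ need not be $\ppx$ and need not be Markov with kernels $\pp^y$. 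Were $\ee$ simply $\eex$, Proposition~\ref{p87} would coincide with Proposition~\ref{c86} verbatim and the citation of Theorem~\ref{t82} in the paper's proof would be pointless; so the extra content of Proposition~\ref{p87} is precisely the passage from $\eex$ to $\ee$, which your argument does not make.

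The paper's proof is short but uses Theorem~\ref{t82} essentially: apply Proposition~\ref{c86} not to $\pp$ but to the smooth approximating processes $\ppx_\varepsilon$ of Theorem~\ref{t61}, whose martingale problem is well-posed and which satisfy Assumptions~\ref{a11} and~\ref{a22} with constants independent of $\varepsilon$. The lower bound produced in the proof of Proposition~\ref{c86} depends only on $\kappa$, $\alpha$, $d$, $\lambda$ and $|Q\cap C|$, so $S_\lambda^\varepsilon 1_C(x) \geq c > 0$ uniformly in $\varepsilon$. Theorem~\ref{t82} with $f = 1_C$ then gives $S_\lambda^\varepsilon 1_C(x) \to \ee\int_0^\infty e^{-\lambda t}1_C(X_t)\,dt$, and the uniform lower bound survives the limit. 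That approximation-and-pass-to-the-limit step is the ingredient missing from your proposal, and it cannot be replaced by conditioning on $X_0$ or applying a Markov property that $\pp$ is not known to have.
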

 
\begin{proof}
This follows immediately from Proposition \ref{c86} and Theorem \ref{t82} with $f = 1_C$.\qed
\end{proof}

\begin{theorem}
Let $n^\varepsilon(x,h)$ be defined by \eqref{e65}.  Then for each fixed $h$, $n^\varepsilon(x,h) \to n(x,h)$ almost everywhere.
\end{theorem}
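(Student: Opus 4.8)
The plan is to realize $n^\varepsilon(\cdot,h)$ as a quotient of two mollifications and then pass to the limit in numerator and denominator separately. First I would record the properties of the measure $\mu$ from \eqref{mu}. It is a finite measure, since $\mu(\rr^d) = \ee\int_0^\infty e^{-\lambda t}\,dt = 1/\lambda$, and it is absolutely continuous with respect to Lebesgue measure — this was already observed in the proof of Theorem \ref{c63} as a consequence of Proposition \ref{p33}. Write $\mu(dy) = g(y)\,dy$ with $g \in L^1(\rr^d)$ and $g \ge 0$. The point that makes everything work is that $g > 0$ almost everywhere: if $g$ vanished on a Borel set $C$ with $|C| > 0$, then $\mu(C) = 0$, contradicting Proposition \ref{p87}. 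Thus the null set $\{g = 0\}$ may be discarded from the outset.

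With this notation, \eqref{e65} reads
\begin{equation*}
n^\varepsilon(x,h) = \frac{(\varphi_\varepsilon * (n(\cdot,h)\,g))(x)}{(\varphi_\varepsilon * g)(x)},
\end{equation*}
and for each fixed $h \neq 0$ the function $n(\cdot,h)$ is bounded above by $\kappa^{-1}|h|^{-(d+\alpha)}$ thanks to Assumption \ref{a11}(b), so $n(\cdot,h)\,g \in L^1(\rr^d)$ as well. Now I would invoke the standard fact that for the mollifier $\varphi$ fixed in Section \ref{approx} — smooth, nonnegative, compactly supported, with $\int\varphi = 1$ — one has $\varphi_\varepsilon * f \to f$ at every Lebesgue point of $f$, hence almost everywhere, whenever $f \in L^1_{loc}(\rr^d)$; this follows from the Lebesgue differentiation theorem, since $|\varphi_\varepsilon * f(x) - f(x)|$ is bounded by a constant times the average of $|f(x-\cdot) - f(x)|$ over a ball of radius comparable to $\varepsilon$. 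Applying this to $f = g$ and to $f = n(\cdot,h)\,g$ yields, for each fixed $h$, a Lebesgue-null set $N_h$ outside of which $(\varphi_\varepsilon * g)(x) \to g(x)$ and $(\varphi_\varepsilon * (n(\cdot,h)\,g))(x) \to n(x,h)\,g(x)$.

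Enlarging $N_h$ by the null set $\{g = 0\}$, for every $x \notin N_h$ the denominator $(\varphi_\varepsilon * g)(x)$ converges to $g(x) > 0$ and is therefore bounded away from zero for all small $\varepsilon$, so that
\begin{equation*}
n^\varepsilon(x,h) \longrightarrow \frac{n(x,h)\,g(x)}{g(x)} = n(x,h),
\end{equation*}
which is the assertion. The only genuine obstacle is establishing that $g > 0$ almost everywhere, and this is precisely what the occupation-time lower bound was built to provide: it is Proposition \ref{p87}, which in turn rests on Proposition \ref{c86} and Theorem \ref{t43}. Everything else is routine real analysis, and I would keep those verifications brief.
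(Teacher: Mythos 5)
Your proposal is correct and follows essentially the same route as the paper: show $\mu$ has a density via Proposition \ref{p33}, show that density is positive a.e.\ via Proposition \ref{p87}, and then pass to the limit in numerator and denominator separately using a.e.\ convergence of mollifications. The only cosmetic difference is that the paper records $m \in L^{p_0/(p_0-1)}$ by a duality argument before invoking the approximate identity, whereas you work directly with $g \in L^1$ and the Lebesgue differentiation theorem — both suffice, and your version is marginally leaner.
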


\begin{proof}
By Proposition \ref{p33}, 
$$\left| \int f(y) \mu(dy) \right| = \left| \ee \int_0^\infty e^{-\lambda t} f(X_t) dt \right| \leq c_1 \|f\|_{p_0}.$$
Therefore, $\mu(dy)$ has a density $m(y) \thinspace dy$, and by a duality argument, $m \in L^{p_0/(p_0-1)}$.  Define $C = \{y\colon m(y) = 0\}$.  We observe that, 
$$\ee \int_0^\infty e^{-\lambda t}1_C(X_t)dt = \int 1_C(y)\mu(dy) = \int1_C(y) m(y) dy = 0,$$
so by Proposition \ref{p87}, $|C| = 0$.

Now, 
\begin{align*}
\int \varphi_\varepsilon(x - y) n(y,h) \mu(dy) &= \int \varphi_\varepsilon(x - y) n(y,h) m(y) dy \\
&\to n(x,h)m(x)
\end{align*}
for almost every $x$, since $\varphi_\varepsilon$ is an approximation to the identity, $n(x,h)$ is bounded for any fixed $h$, and $m \in L^{p_0/(p_0-1)}$.  Similarly, 
$$\int \varphi_\varepsilon(x - y) \mu(dy) = \int \varphi_\varepsilon(x - y) m(y) dy \to m(x)$$
for almost every $x$.  Since $m > 0$ almost everywhere, the ratio, which is $n^\varepsilon(x,h)$, converges to $n(x,h)$ almost everywhere.
\qed
\end{proof}




\end{document}